\newtheorem{theorem}{Theorem}[section]
\newtheorem{proposition}[theorem]{Proposition}
\newtheorem{definition}[theorem]{Definition}
\newtheorem{remark}[theorem]{Remark}
\newcommand{\De}{\Delta}
\newcommand{\La}{\Lambda}
\newcommand{\Om}{{\Omega}}
\newcommand{\al}{{\alpha}}
\newcommand{\e}{\epsilon}
\newcommand{\ga}{{\gamma}}
\newcommand{\eps}{\varepsilon}
\newcommand{\si}{{\sigma}}
\newcommand{\om}{{\omega}}
\newcommand{\vr}{{\varrho}}
\newcommand{\Ac}{\mathfrak{A}}
\newcommand{\sC}{\mathscr{C}}
\newcommand{\sN}{\mathscr{N}}
\newcommand{\sR}{\mathscr{R}}
\newcommand{\N}{{\mathbb N}}
\newcommand{\R}{{\mathbb R}}
\newcommand{\cO}{{\cal O}}
\newcommand{\cF}{{\cal F}}
\newcommand{\cE}{{\cal E}}
\newcommand{\cH}{{\cal H}}
\newcommand{\cL}{{\cal L}}
\newcommand{\cK}{{\cal K}}
\newcommand{\cW}{{\cal W}}
\newcommand{\hf}{{\frac12}}
\newcommand{\g}{{\nabla}}
\newcommand{\bu}{\bar{u}}
\newcommand{\pd}{\partial}
\newcommand{\arr}{\rightarrow}
\newcommand{\Lto}{\widehat L_2(\Om)}
\newcommand{\Hto}{\widehat H^2_{0}(\Om)}
\newcommand{\hch}{\widehat{\cH}}
\newcommand{\wH}{\widehat{H}}
\newcommand{\tp}{\tilde{\phi}}
\newcommand{\hp}{\hat{\phi}}
\newcommand{\Ha}{{H_\alpha}}
\newcommand{\wtw}{\widetilde{w}}
\newcommand{\whw}{\widehat{w}}
\newcommand{\whW}{\widehat{W}}
\newcommand{\tV}{\widetilde{V}}
\newcommand{\di}{{\rm div\, }}
\newcommand{\wrt}{{with respect to }}
\newenvironment{declaration}[1]{\trivlist
\item[\hskip \labelsep{\bf #1 }]\ignorespaces}{\endtrivlist}
\newenvironment{proofof}[1]{\begin{declaration}{#1}}{\hfill
$\square$ \end{declaration}}
\newenvironment{proof}{\begin{proofof}{Proof.}}{\end{proofof}}
\begin{document}
\title{
Unsteady interaction
of a viscous fluid with an elastic shell modeled
by  full von Karman equations}
\author{Igor Chueshov\footnote{\small e-mail:
 chueshov@univer.kharkov.ua}
 ~~~ and ~~~
  Iryna Ryzhkova\footnote{\small e-mail:
 iryonok@gmail.com} \\
 \\Department of Mechanics and Mathematics, \\
 Kharkov National University, \\ Kharkov, 61077,  Ukraine\\  }

\maketitle

\begin{abstract}
We study well-posedness and asymptotic dynamics
of a  coupled system consisting of linearized 3D
Navier--Stokes equations in a bounded domain  and a classical (nonlinear) full von Karman shallow shell equations that accounts for both transversal and lateral displacements
on a flexible   part  of the boundary.
We also take into account rotational inertia of filaments  of the shell.
Out main result shows that the problem generates a semiflow in an appropriate
phase space. The regularity provided  by  viscous dissipation
 in the fluid allows us to consider simultaneously both  cases of presence inertia in the lateral displacements and its absence.
Our second result states
 the existence of a compact global attractor for this semiflow in the case of
  presence of (rotational) damping in the transversal component
 and a particular structure of external forces.

\par\noindent
{\bf Keywords: } Fluid--structure interaction, linearized 3D Navier--Stokes equations, nonlinear shell, global attractor.
\par\noindent
{\bf 2010 MSC:} 74F10, 35B41, 35Q30, 74K20
\end{abstract}

\section{Introduction}
We consider a coupled (hybrid) system which describes an
interaction of a homogeneous viscous incompressible fluid
which  occupies a domain $\cO$ bounded by
the (solid) walls of the container $S$ and a horizontal (flat) boundary $\Om$
on which a thin (nonlinear) elastic shell is placed.
The motion of the fluid is described
by  linearized 3D Navier--Stokes equations.
To describe  deformations of the shell
 we use  the full von Karman shallow shell  model  which accounts  for both
transversal and in-plane displacements. For details concerning the shell
 model chosen we refer to  \cite{Vor1957,lagnese,lag-2} and
 also to the papers
 \cite{las-SIAM98,las-AA98,las-CPDE99,puel-AMO96,Sed91b,Sed_1997} and the references therein.
\par
This fluid-structure interaction
  model assumes that  large deflections of the shell produce
 small effect on the fluid. This  corresponds to
the case when the fluid fills the container which is large in comparison
with the size of the plate.
\par
We note that the mathematical studies of the problem
of fluid--structure interaction
in the case of viscous fluids and elastic plates/bodies
have a long history.
We refer to
\cite{CDEG05,Chu_2010,ChuRyz2011,Ggob-jmfm08,Ggob-aa09,Ggob-mmas09,Kop98}
and the references therein for the case of plates/membranes,
to \cite{CS06} in the case of moving elastic bodies,
and to \cite{avalos-amo07,aval-tri07,aval-tri09,BGLT07,BGLT08,DGHL03}
in the case of elastic bodies with the fixed interface;
see also  the literature cited in these papers.
\par
We also note that the global (asymptotic)  dynamics in nonlinear plate-fluid
models were studied before in \cite{Chu_2010,ChuRyz2011}.
The article \cite{Chu_2010} deals with
 a class of fluid-plate interaction problems, when the plate, occupying $\Om$, oscillates in {\em longitudinal} directions only.
This kind of models arises in
the study of blood flows in large arteries
(see, e.g., \cite{Ggob-jmfm08} and
the references therein).
A fluid-plate interaction model, accounting for {\em only transversal} displacement of the plate, was studied  in \cite{ChuRyz2011}.
In contrast our mathematical model  formulated below
takes into  account  {\em both}
transversal and in-plane displacements.
\smallskip\par
 Let $\cO\subset \R^3$
 be a bounded domain  with a sufficiently smooth
 boundary $\partial\cO$. We assume that $\partial\cO=\Omega\cup S$,
 where
$$
\Om\subset\{ x=(x_1;x_2;0)\, :\,x'\equiv
(x_1;x_2)\in\R^2\}
$$ with a smooth contour $\Gamma=\partial\Om$
and $S$
 is a  surface which lies in the subspace $\R^3_- =\{ x_3\le 0\}$.
 The exterior normal on $\partial\cO$ is denoted
 by $n$. We have that $n=(0;0;1)$ on $\Om$.
 \par
 We consider the following {\em linear} Navier--Stokes equations in $\cO$
for the fluid velocity field $v=v(x,t)=(v^1(x,t);v^2(x,t);v^3(x,t))$
and for the pressure $p(x,t)$:
\begin{equation}\label{fl.1}
   v_t-\nu\Delta v+\nabla p=G_f(t)\quad {\rm in\quad} \cO
   \times(0,+\infty),
\end{equation}
   \begin{equation}\label{fl.2}
   \di v=0 \quad {\rm in}\quad \cO
   \times(0,+\infty),
  \end{equation}
where $\nu>0$ is the dynamical viscosity and $G_f(t)$ is a volume force
(which may depend on $t$).
   We supplement (\ref{fl.1}) and (\ref{fl.2}) with  the (non-slip)  boundary
   conditions imposed  on the velocity field $v=v(x,t)$:
\begin{equation}\label{fl.4}
v=0 ~~ {\rm on}~S;
\quad
v\equiv(v^1;v^2;v^3)=(u^1_t;u^2_t;w_t) ~~{\rm on} ~ \Om,
\end{equation}
where $u=u(x,t)\equiv (u^1; u^2; w)(x,t)$ is the  displacement
of the shell occupying $\Om$. Here $w$ stands for transversal displacement, $\bu=(u^1;u^2)$ --- for lateral (in-plane) displacements.
\par
To describe the shell motion we use the full von Karman model which
takes into account the rotational inertia of the filaments
 and possible presence of in-plane acceleration terms
 (see the literature cited above).
 \par
 We denote by $T_f(v)$ the surface force exerted by the fluid on the shell, which is equal to $Tn\vert_\Om$, where $n$ is a outer unit normal to $\pd\cO$ at $\Om$  and $T=\{T_{ij}\}_{i,j=1}^3$ is the stress tensor of the fluid,
\[
T_{ij}\equiv T_{ij}(v)=\nu\left(v^i_{x_j}+v^j_{x_i}\right)-p\delta_{ij},
\quad i,j=1,2,3.
\]
Since  $n=(0;0;1)$ on $\Om$, we have that
\begin{equation*}
    T_f(v)=(\nu(v^1_{x_3}+v^3_{x_1}); \nu(v^2_{x_3}+v^3_{x_2}); 2\nu \pd_{x_3}v^3 -p).
\end{equation*}
Below for some simplification we assume that for the case considered
Young's modulus $E$ and Poisson's ratio $\mu\in(0,1/2)$
are such that $Eh=2(1+\mu)$, where $h$ is the thickness of the shell.
In this case
the  (elastic) stress tensor $\{N_{ij}\}$  is given by
the formulas
\begin{align}\label{Hooke}
N_{11} =\frac 2{1-\mu}\left(\eps_{11}+\mu \eps_{22}\right),~~
N_{22} =\frac 2{1-\mu}\left(\eps_{22}+\mu \eps_{11}\right),~~ N_{12}=\eps_{12},
\end{align}
where the deformation tensor
$\{\eps_{ij}\}$ has the form
\begin{align*}
\eps_{11} &= u^1_{x_1}+ k_1 w+ \frac 12 (w_{x_1})^2, \\
\eps_{22} &=u^2_{x_2}+ k_2 w+ \frac 12 (w_{x_2})^2,\\
\eps_{12} &=u^1_{x_2}+u^2_{x_1} + w_{x_1}w_{x_2}.
\end{align*}
Here $k_1$ and $k_2$ are curvatures of the initial form of the shell which
are sufficiently smooth functions of $x'\in\Om$.
\par
After an  appropriate rescaling of the parameters and functions
we can model shell dynamics by the following equations
\begin{multline}\label{u3-eq}
M_\alpha (w_{tt} +\ga w_t)  + \De^2 w +k_1N_{11}+k_2N_{22} \\  -\pd_{x_1}(N_{11}w_{x_1}+N_{12}w_{x_2}) -
\pd_{x_2}(N_{12}w_{x_1}+N_{22}w_{x_2}) \\
=  G_3(t)- 2\nu \pd_{x_3}v^3 +p  ~~{\rm in}~~ \Omega \times (0, \infty),
\end{multline}
where  $M_\alpha=1-\alpha\De$, and
\begin{align}\label{u1u2-eq}
    \vr u_{tt}^1 & =\pd_{x_1} N_{11}+\pd_{x_2} N_{12}+ G_1(t)-\nu (v^1_{x_3}+v^3_{x_1}), \notag \\
      \vr u_{tt}^2 & =\pd_{x_1} N_{12}+\pd_{x_2} N_{22}+ G_2(t)-\nu (v^2_{x_3}+v^3_{x_2}),
\end{align}
where  $G_{sh}(t)\equiv (G_1; G_2; G_3)(t)$ is a given body force applied to  the shell.
$\alpha> 0$ and $\vr\ge 0$ are constants
 which take into account rotational inertia and in-plane inertia of
 the shell, respectively, $\ga$ is a non-negative parameter
 which describes intensity of the viscous damping of the shell material.
\par

 We impose the clamped boundary conditions on the shell
\begin{equation}
u^1|_{\pd\Om}=u^2|_{\pd\Om}=w|_{\pd\Om}=\left.\frac{\pd w}{\pd n} \right|_{\pd\Om}=0 \label{plBC}
\end{equation}
and supply \eqref{fl.1}--\eqref{plBC} with initial data
for the velocity field $v=(v^1; v^2; v^3)$ and the shell
displacement vector $u=(u^1; u^2; w)$
 of the form\footnote{
 We put the multiplier $\vr$ in the fourth relation of \eqref{IC}
 to emphasize that this relation is not needed in the case
 of negligibly small in-plane inertia ($\vr=0$).
 }
\begin{equation}
v\big|_{t=0}=v_0,~~
u\big|_{t=0}=u_0, ~~
w_t\big|_{t=0}=w_1, ~~
\vr\left[\bu_t\big|_{t=0}-\bu_1\right]=0, \label{IC}
\end{equation}
where $\bu=(u^1; u^2)$. Here $v_0=(v^1_0; v^2_0; v^3_0)$,  $u_0=(u^1_0; u^2_0; w_0)$, $w_1$,  and
$\bu_1=(u^1_1; u^2_1)$ are given vector functions which we specify
later.
\par
We  note that  \eqref{fl.2} and \eqref{fl.4} imply the following
compatibility condition
\begin{equation}\label{Com-con}
\int_\Om w_t(x',t) dx'=0 \quad \mbox{for all}~~ t\ge 0.
\end{equation}
This condition fulfills when
\[
\int_\Om w(x',t) dx'=const \quad \mbox{for all}~~ t\ge 0
\]
and can be interpreted as preservation of the volume of the fluid.
\smallskip\par
We emphasize that even in the linear case we cannot split
system \eqref{fl.1}--\eqref{IC} into two sets
of equations describing  longitudinal and
transversal plate movements separately, i.e.,
we cannot reduce the model considered to
the cases studied in \cite{Chu_2010,ChuRyz2011}.
The point is that the surface  force $T_f(v)$
 is not the sum of the corresponding loads
in the models \cite{Chu_2010} and \cite{ChuRyz2011}.
The models in \cite{Chu_2010,ChuRyz2011}  are much simpler in several respects.
For instance, in the case of  longitudinal plate deformations  only
(see \cite{Chu_2010}) the equations which correspond to \eqref{u1u2-eq}
do not contain the terms $v^3_{x_i}$ and
the model does not require any compatibility conditions like (\ref{Com-con}) because the volume of the fluid obviously preserves in the case of longitudinal deformations.
In the case of purely transversal displacements \cite{ChuRyz2011} the force exerted on the plate by the fluid contains
the pressure only.

\par
In the following remark we describe some structural properties of
the shell model chosen.

\begin{remark}\label{re:eq-u1u2}
{\rm {\bf (A)}
One can see that the equations  for the in-plane displacement vector   $\bu=(u^1;u^2)$ can be written in the vector form as follows:
\begin{equation}\label{u12-eq}
\vr \bu_{tt}+A\bu= B(w)+(G_1(t)-\nu(v^1_{x_3}+v^3_{x_1}); G_2(t)-\nu(v^2_{x_3}+v^3_{x_2})),
\end{equation}
where
the operator $A$ in \eqref{u12-eq} is defined by
\begin{equation*}
A=
-\left(
    \begin{aligned}
        & (1+\lambda)\pd^2_{x_1} +\pd^2_{x_2} & \lambda\pd_{x_1 x_2} \\
        & \lambda\pd_{x_1 x_2} &  \pd^2_{x_1} +(1+\lambda)\pd^2_{x_2} \\
    \end{aligned}
\right)
\end{equation*}
with $\lambda=\frac{1+\mu}{1-\mu}$ and $D(A)=[H^2(\Om)\bigcap H^1_0(\Om)]^2$. The nonlinear term  $B(w)$ in \eqref{u12-eq} has the  form
\begin{equation*}
B(w)=\left(
    \begin{aligned}
        &\frac1{1-\mu} \pd_{x_1}\left[2\varkappa_1 w+ (w_{x_1})^2 +\mu(w_{x_2})^2\right] +\pd_{x_2}\left[w_{x_1}w_{x_2}\right]\\
        &\pd_{x_1}\left[w_{x_1}w_{x_2}\right] +\frac1{1-\mu} \pd_{x_2}\left[2\varkappa_2 w+(w_{x_2})^2 +\mu(w_{x_1})^2\right]
    \end{aligned}
\right),
\end{equation*}
where $\varkappa_1=k_1+\mu k_2$ and  $\varkappa_2=k_2+\mu k_1$.
Thus for fixed $w$ and $v$  the equations for the in-plain displacement
$\bu$ are similar to the standard  equations of $2D$ (linear) elasticity theory.
\par
{\bf (B)} The equations in \eqref{u3-eq} and \eqref{u1u2-eq}
can be also written in the form
\begin{multline*}
(1-\alpha\De) (w_{tt} +\ga w_t)  + \De^2 w + {\rm trace}\left\{ K \sN(u)\right\}
 \\
={\rm div}\left\{ \sN(u)\nabla w\right\}
+ G_3(t)- 2\nu \pd_{x_3}v^3 +p
\end{multline*}
and
\[
    \vr \bu_{tt}  = {\rm div}\left\{ \sN(u)\right\}  +
     \left(
       \begin{array}{c}
        G_1(t)-\nu (v^1_{x_3}+v^3_{x_1}) \\
          G_2(t)-\nu (v^2_{x_3}+v^3_{x_2})\\
       \end{array}
     \right),
\]
where $K={\rm diag}\, (k_1, k_2)$ and
\[
 \sN(u)\equiv \left(
    \begin{aligned}
        N_{11} & \; N_{12} \\
        N_{12} &\; N_{22}
    \end{aligned}
\right)= \sC(\epsilon_0 (\bu)+ w K + f(\nabla w))
\]
with  $\bu=(u_1;u_2)$, $\sC(\epsilon)=2(1-\mu)^{-1}\left[\mu \, {\rm trace}\, \epsilon \cdot  I +(1-\mu)\epsilon \right]$ and
\[
\epsilon_0(\bu)=\frac 12 (\nabla \bu + \nabla ^T \bu),
\quad f(s)=\frac 12 s \otimes s, \; s\in \R^2.
\]
This form of the full von Karman system was used earlier
by many authors
in the case when the   fluid velocity field $v$ is absent and  $k_i\equiv 0$ (see, e.~g.,~\cite{puel-AMO96} or \cite{KochLa_2002}
and the references therein).
}
\end{remark}

Our main goal in this paper  is to prove well-posedness of the problem
in \eqref{fl.1}--\eqref{Com-con} in the class of finite energy solutions
(see Theorem~\ref{th:WP})
and to show  a possibility of  compact long-time dynamics
(see Theorem~\ref{th:attractor} on the existence of a compact
global attractor).
We note for the proof of uniqueness in Theorem~\ref{th:WP}
relies substantially on the $H^1$-regularity of $w_t$,
which follows from the structure of the mass operator $M_\al$
and involves Sedenko's method (see \cite{Sed91b,Sed_1997}).
To prove the existence of a global attractor in Theorem~\ref{th:attractor}
we use J.Ball's method (see \cite{ball} and \cite{MRW}).
To apply this method
we need the property $\ga>0$, i.e., assume a presence of rotational damping in the transversal component of displacement. The question whether the system under consideration demonstrates compact long-time behavior  without mechanical damping in the shell component is still open. In contrast we note that the existence of global attractors in the models considered in \cite{Chu_2010,ChuRyz2011}
{\em does not require} any mechanical damping
and compact asymptotic dynamics  of the corresponding system
is guaranteed by viscous dissipation of fluid.
One of the reasons for this is that the models in  \cite{Chu_2010,ChuRyz2011}
do not include higher order (rotational type) inertial terms
and thus the finiteness of the {\em full} dissipation integral
for the displacement follows from the viscosity of the fluid via 
the compatibility  condition in \eqref{fl.4}.

\par
The paper organized as follows.  In Section~\ref{sec:pre} we provide some
preliminary material related to Sobolev spaces and the Stokes problem.
In Section~\ref{sec:WP} we state and prove our main well-posedness result.
Section~\ref{sec:asymp} deals with long-time dynamics.

\section{Preliminaries}\label{sec:pre}
In this section we introduce Sobolev type spaces we need and
provide with some results concerning   the Stokes problem.

\subsection{Spaces and notations}
To introduce Sobolev spaces we follow approach presented
in \cite{Triebel78}.
\par
Let $D$ be a sufficiently smooth domain  and $s\in\R$.
We denote  by $H^s(D)$ the Sobolev space of order $s$
on the set $D$ which we define as  a restriction (in the sense of distributions)
 of the
space $H^s(\R^d)$ (introduced via Fourier transform).
We define the norm in  $H^s(D)$ by the relation
\[
\|u\|_{s,D}^2=\inf\left\{\|w\|_{s,\R^d}^2\, :\; w\in H^s(\R^d),~~ w=u ~~
\mbox{on}~~ D
    \right\}
\]
We also use the notation $\|\cdot \|_{D}=\|\cdot \|_{0,D}$ and $(\cdot,\cdot)_D$
for the corresponding $L_2$ norm and  inner product.
We denote by $H^s_0(D)$ the closure of $C_0^\infty(D)$ in  $H^s(D)$
(\wrt  $\|\cdot \|_{s,D}$) and introduce the spaces
\[
H^s_*(D):=\left\{ f\big|_D\, :\; f\in H^s(\R^d),\;
{\rm supp }\, f\subset \overline{D}\right\},\quad s\in \R.
\]
Below we need them to describe
boundary traces on $\Om\subset\partial \cO$.
We endow the classes $H^s_*(D)$ with the induced norms
 $\|f \|^*_{s,D}= \| f \|_{s,\R^d}$
for $f\in H^s_*(D)$. It is clear that
\[
\|f \|_{s,D}\le \|f \|^*_{s,D}, ~~ f\in H^s_*(D).
\]
However, in general the norms $\|\cdot \|_{s,D}$ and
$\|\cdot \|^*_{s,D}$ are not equivalent.
It is known that (see \cite[Theorem 4.3.2/1]{Triebel78})
that $C_0^\infty(D)$ is dense in $H^s_*(D)$ and
\begin{align*}
& H^s_*(D)\subset H^s_0(D)\subset H^s(D),~~~ s\in\R;\\
& H^s_0(D) =  H^s(D),~~~ -\infty< s\le 1/2;\\
& H^s_*(D)= H^s_0(D),~~~ -1/2<  s<\infty,~~ s-1/2\not\in
\{ 0,1,2,\ldots\}.
\end{align*}
In particular, $H^s_*(D)= H^s_0(D)= H^s(D)$ for $|s|<1/2$.
 Note that  in the notations of \cite{LiMa_1968}
the space $H^{m+1/2}_*(D)$ is the same as $H^{m+1/2}_{00}(D)$ for every
 $m= 0,1,2,\ldots$ , and for $s=m+\si$ with $0<\si<1$ we have
\[
\|u \|^*_{s,D}=\left\{ \| u\|^2_{s,D}
+\sum_{|\al|=m}\int_D \,\frac{|D^\al u(x)|^2}{d(x,\pd D)^{2\si}}\, dx
\right\}^{1/2},
\]
where $d(x,\pd D)$ is the distance between $x$ and $\pd D$.
The norm  $\|\cdot \|^*_{s,D}$ is equivalent to
 $\|\cdot \|_{s,D}$ in the case when
$s>-1/2$ and  $s-1/2\not\in\{ 0,1,2,\ldots\}$.
\par
Understanding adjoint spaces \wrt duality between
$C_0^\infty(D)$ and $[C_0^\infty(D)]'$
by Theorems 4.8.1 and 4.8.2 from \cite{Triebel78} we also have that
\begin{align*}
 [H^s_*(D)]'= H^{-s}(D),~ s\in\R, ~~~\mbox{and} ~~~
 [H^s(D)]' =  H_*^{-s} (D),~ s\in (-\infty,1/2).
\end{align*}
Below we also use the factor-spaces
$H^s(D)/\R$  with the naturally induced norm.
\medskip\par
To describe fluid velocity fields
we introduce the following spaces.
\par
Let $\mathscr{C}(\cO)$  be the class of
$C^\infty$ vector-valued solenoidal (i.e., divergence-free) functions
on $\overline{\cO}$ which vanish in a neighborhood  of $S$.
We denote by $X$ the closure of $\sC(\cO)$ \wrt  the $L_2$-norm and
by $V$ the closure  \wrt the $H^1(\cO)$-norm. One
can see that
\begin{equation}\label{X-space}
X=\left\{ v=(v^1;v^2;v^3)\in [L_2(\cO)]^3\, :\; {\rm div}\, v=0,
\gamma_n v\equiv (v,n)=0~\mbox{on}~ S\right\};
\end{equation}
and
\begin{equation*}
V=\left\{
v=(v^1;v^2;v^3)\in [H^1(\cO)]^3\, :\;
 {\rm div}\, v=0,\;
v=0~\mbox{on}~ S
  \right\}.
\end{equation*}
We equip   $X$ with $L_2$-type norm $\|\cdot\|_\cO$
and denote by $(\cdot,\cdot)_\cO$ the corresponding inner product.
The space $V$ is endowed  with the norm  $\|\cdot\|_V= \|\nabla\cdot\|_\cO$.
For some details concerning this type spaces we refer to \cite{temam-NS},
for instance.
\par
We also need the Sobolev spaces consisting of functions with zero average
on the domain $\Om$, namely
we consider the space
\[
\widehat{L}_2(\Om)=\left\{u\in L_2(\Om): \int_\Om u(x') dx' =0 \right\}
\]
and also  $\widehat H^s(\Om)=H^s(\Om)\cap\widehat L_2(\Om)$ for $s>0$
with the standard $H^s(\Om)$-norm.
The notations   $\widehat H^s_*(\Om)$ and $\widehat H^s_0(\Om)$
have a similar meaning.

To describe shell displacement  we use the spaces
\begin{equation}\label{WY-space}
W=H^1_0(\Om) \times H^1_0(\Om) \times H^2_0(\Om), \quad Y=L_2(\Om) \times L_2(\Om) \times H_\alpha,
\end{equation}
where $H_\alpha$ equals to  $\widehat{H}^1_0(\Om)$ with the equivalent norm $||\cdot||^2_\Ha=||\cdot||^2_\Om +\alpha||\g\cdot||^2_\Om$ and corresponding inner product.

\begin{remark}\label{re:hat-space}
{\rm Below we also use $\widehat{H}^2_0(\Om)$ as a state space for the
displacement of the plate.
It is clear that $\widehat{H}^2_0(\Om)$ is a closed subspace of $H^2_0(\Om)$.
We denote by $\widehat{P}$   the projection
on  $\widehat{H}^2_0(\Om)$ in $H^2_0(\Om)$ which is orthogonal
with respect to the inner product $(\Delta\cdot, \Delta\cdot)_\Om$.
One can see that
$(I-\widehat{P})H^2_0(\Om)$
  consists of functions $u\in H^2_0(\Om)$ such that $\Delta^2u=const$ and
thus has dimension one.
}
\end{remark}

\subsection{Stokes problem}

In further considerations we need some regularity  properties
of the terms responsible for fluid--plate interaction.
To this end we consider
 the following Stokes problem
\begin{align}
  -\nu\Delta v+\nabla p= g, \quad
   \di v=0 \quad {\rm in}\quad \cO; \nonumber
\\
 v=0 ~~ {\rm on}~S;
\quad
v=\psi=(\psi^1;\psi^2;\psi^3) ~~{\rm on} ~ \Om,\label{stokes}
\end{align}
where $g\in [L^2(\cO)]^3$ and $\psi\in [L^2(\Om)]^2 \times \Lto$ are given.
This type of boundary value problems for the Stokes equation
was studied by many authors  (see, e.g., \cite{lad-NSbook} and \cite{temam-NS}
and references therein). We collect some properties of solutions
to \eqref{stokes} in the following assertion.
\begin{proposition}\label{pr:stokes}
The following statements hold.
\begin{itemize}
\item [{ \bf (1)}] Let $g\in [H^{-1+\sigma}(\cO)]^3$, and $\psi\in [H^{1/2+\sigma}_*(\Om)]^3$ with
 $\int_\Om\psi^3(x')dx'=0$.
Then for every $0\le \sigma\le  1$
problem \eqref{stokes} has a unique solution
$\{v;p\}$ in $[H^{1+\sigma}(\cO)]^3\times[ H^{\sigma}(\cO)/\R]$ such that
\begin{equation}\label{stokes-bnd1}
\|v\|_{[H^{1+\sigma}(\cO)]^3}+\|p\|_{H^{\sigma}(\cO)/\R}
\le c_0\left\{\|g\|_{[H^{-1+\sigma}(\cO)]^3}+\|\psi\|_{[H_*^{\sigma+1/2}(\Om)]^3} \right\}.
\end{equation}
  \item [{ \bf (2)}]If $g=0$,  $\psi\in [H_*^{-1/2+\si}(\Om)]^3$,
$0\le \si\le 1$,  $\int_\Om\psi^3(x')dx'=0$, then
  \begin{equation}\label{stokes-bnd2}
\|v\|_{[H^{\si}(\cO)]^3}+\|p\|_{H^{-1+\si}(\cO)/\R}
\le c_0\|\psi\|_{[H_*^{-1/2+\si}(\Om)]^3}.
\end{equation}
In particular, we can define a linear operator
$N_0 : [L^2(\Om)]^2 \times \Lto \mapsto [H^{1/2}(\cO)]^3$
 by the formula
\begin{equation}\label{fl.n0}
N_0\psi=w ~~\mbox{iff}~~\left\{
\begin{array}{l}
 -\nu\Delta w+\nabla p=0, \quad
   \di w=0 \quad {\rm in}\quad \cO;
\\
 w=0 ~~ {\rm on}~S;
\quad
w=\psi ~~{\rm on} ~ \Om,
\end{array}\right.
\end{equation}
for $\psi\in [L^2(\Om)]^2 \times\Lto$
($N_0\psi$ solves \eqref{stokes} with $g\equiv 0$).
It follows from (\ref{stokes-bnd1}) and (\ref{stokes-bnd2})
that
\[
N_0 :\, [H^s_*(\Om)]^2\times \widehat{H}^s_*(\Om)\mapsto [H^{1/2+s}(\cO)]^3\cap X
~~continuously
\]
for every $-1/2\le s\le3/2$.
\end{itemize}
\end{proposition}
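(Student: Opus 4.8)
The plan is to reduce both statements to the classical $L_2$-regularity theory for the homogeneous Dirichlet--Stokes problem on the smooth bounded domain $\cO$ (see \cite{lad-NSbook,temam-NS}), treating the non-homogeneous boundary data by a solenoidal lifting and, at low regularity, by transposition. For part~(1) the crucial first step is to lift $\psi$. Extending $\psi$ by zero onto $S$ and using that $\psi\in[H^{1/2+\sigma}_*(\Om)]^3$, one obtains a field $\widetilde\psi\in[H^{1/2+\sigma}(\partial\cO)]^3$ with $\|\widetilde\psi\|_{H^{1/2+\sigma}(\partial\cO)}\le c\,\|\psi\|^*_{1/2+\sigma,\Om}$; this is precisely why the spaces $H^s_*(\Om)$ --- and not $H^s(\Om)$ --- appear in the statement. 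Since $\psi=0$ on $S$, $n=(0;0;1)$ on $\Om$ and $\int_\Om\psi^3\,dx'=0$, the net flux $\int_{\partial\cO}\widetilde\psi\cdot n\,dS$ vanishes, so I would take a bounded lifting $V_0\in[H^{1+\sigma}(\cO)]^3$ of $\widetilde\psi$ and correct it by the Bogovskii operator (solving $\di z=\di V_0$ with $z\in[H^{1+\sigma}_0(\cO)]^3$, which is possible because $\int_\cO\di V_0=\int_{\partial\cO}\widetilde\psi\cdot n=0$), obtaining a divergence-free $V:=V_0-z\in[H^{1+\sigma}(\cO)]^3$ with $V|_{\partial\cO}=\widetilde\psi$ and $\|V\|_{[H^{1+\sigma}(\cO)]^3}\le c\,\|\psi\|^*_{1/2+\sigma,\Om}$.

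Substituting $v=V+u$ then turns \eqref{stokes} into the homogeneous Stokes system for $\{u;p\}$ with $u|_{\partial\cO}=0$ and right-hand side $g+\nu\Delta V\in[H^{-1+\sigma}(\cO)]^3$; the classical regularity theory gives a unique $\{u;p\}\in[H^{1+\sigma}(\cO)]^3\times[H^\sigma(\cO)/\R]$ for $0\le\sigma\le1$, which together with the lifting bound yields \eqref{stokes-bnd1}. Uniqueness in that class follows by applying the energy identity of the homogeneous problem to the difference of two solutions: this forces the velocities to coincide, hence $\g p$ to vanish, i.e.\ the pressures agree in $H^\sigma(\cO)/\R$.

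For part~(2) the endpoint $\sigma=1$ is nothing but part~(1) with $\sigma=0$ (the data space $H^{1/2}_*(\Om)$ being exactly the trace space at energy level). For the genuinely new endpoint $\sigma=0$, where $\psi$ is only of order $-1/2$, I would define $v$ by transposition: for $\phi\in[C^\infty_0(\cO)]^3$ let $\{w;q\}$ solve $-\nu\Delta w+\g q=\phi$, $\di w=0$ in $\cO$, $w|_{\partial\cO}=0$; by part~(1) with $\sigma=1$ one has $w\in[H^2(\cO)]^3$, $q\in H^1(\cO)/\R$ with norms $\le c\|\phi\|_{[L_2(\cO)]^3}$, hence $\partial_n w|_\Om\in[H^{1/2}(\Om)]^3$ and $q|_\Om\in H^{1/2}(\Om)$. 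A formal integration by parts in $\int_\cO v\cdot\phi\,dx$, using $\di v=0$, $-\nu\Delta v+\g p=0$, $\psi|_S=0$ and $n|_\Om=(0;0;1)$, produces the identity
\[
\int_\cO v\cdot\phi\,dx=-\nu\int_\Om\psi\cdot\partial_n w\,dx'+\int_\Om\psi^3\,q\,dx' ,
\]
whose right-hand side is, by the dualities $[H^{-1/2}_*(\Om)]'=H^{1/2}(\Om)$ and $[\widehat{H}^{-1/2}_*(\Om)]'=\widehat{H}^{1/2}(\Om)$ together with $\int_\Om\psi^3=0$, a bounded linear functional of $\phi\in[L_2(\cO)]^3$ with norm $\le c\,\|\psi\|^*_{-1/2,\Om}$. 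By Riesz representation this identity \emph{defines} $v\in[L_2(\cO)]^3$ and gives $\|v\|_{[L_2(\cO)]^3}\le c\,\|\psi\|^*_{-1/2,\Om}$; a routine density argument together with de Rham's lemma then exhibits a pressure $p\in H^{-1}(\cO)/\R$ with $\|p\|_{H^{-1}(\cO)/\R}\le c\|v\|_{[L_2(\cO)]^3}$, so that $v$ is the unique very weak solution of \eqref{stokes} (attaining $\psi$ on $\Om$ and $0$ on $S$ in the corresponding trace sense). For $0<\sigma<1$ I would interpolate the linear solution map $\psi\mapsto\{v;p\}$ between $\sigma=0$ and $\sigma=1$, using that $\{H^s_*(\Om)\}$ and $\{H^s(\cO)\}$ are interpolation scales in the relevant ranges; this gives \eqref{stokes-bnd2}.

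Finally, taking $g=0$ in part~(1) with $\sigma=s-1/2\in[0,1]$ (for $1/2\le s\le 3/2$) and in part~(2) with $\sigma=s+1/2\in[0,1]$ (for $-1/2\le s\le1/2$) shows that $N_0\psi=v$ defined by \eqref{fl.n0} lies in $[H^{1/2+s}(\cO)]^3$ for every $-1/2\le s\le3/2$ and every $\psi\in[H^s_*(\Om)]^2\times\widehat{H}^s_*(\Om)$, with the asserted continuity; that $N_0\psi\in X$ is immediate from $\di(N_0\psi)=0$ and $N_0\psi|_S=0$ in view of \eqref{X-space}. The steps I expect to be the main obstacle are the construction of the solenoidal lifting with control precisely in the $*$-norms --- where the compatibility condition $\int_\Om\psi^3=0$ and the exact choice of boundary spaces are indispensable --- and making the transposition argument for part~(2) rigorous at the lowest regularity level, in particular identifying the correct duality pairings, controlling the pressure-trace term $\int_\Om\psi^3 q$, and recovering the boundary trace of the very weak solution.
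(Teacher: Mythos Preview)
Your proposal is correct and follows the standard route that the paper implicitly adopts: the paper's own proof is simply a citation to \cite{Chu_2010,ChuRyz2011}, whose arguments in turn rest on the classical Stokes theory in \cite{lad-NSbook,temam-NS} for the lifted homogeneous problem (your part~(1)) and on the very-weak/transposition framework of \cite{GSS2005} for low-regularity boundary data (your part~(2)). Your outline---solenoidal lifting via extension-by-zero and Bogovskii correction, duality definition of $v$ at $\sigma=0$ with the pressure trace controlled through $\int_\Om\psi^3=0$, and interpolation in between---is exactly the content those references supply, so nothing is missing.
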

\begin{proof}
We just combine proof of Proposition~2.1 \cite{Chu_2010} and Proposition~2.2 \cite{ChuRyz2011}. The argument in these references rely
on the consideration in \cite{lad-NSbook,temam-NS} and also in \cite{GSS2005}.
\end{proof}

\section{Well-Posedness Theorem}\label{sec:WP}
To define weak (variational) solutions to \eqref{fl.1}-\eqref{IC}
we need the following class $\cL_T$ of test functions $\phi$ on $\cO$:
\begin{equation*}
\cL_T=\left\{\phi \left|\begin{array}{l}
\phi\in L_2(0,T; \left[H^1(\cO)\right]^3),\; \phi_t\in L_2(0,T;  [L_2(\cO)]^3),  \\
{\rm div}\phi=0,\; \phi|_S=0,\; \phi|_\Om=b=(b^1;b^2;d),  \\
d\in  L_2(0,T; \Hto),\; b^j \in L_2(0,T; H^1_0(\Om)), \; j=1,2,\; \\
d_t\in  L_2(0,T; \widehat{H}^1_0(\Om)), \; b^j_t \in L_2(0,T; L_2(\Om)), \; j=1,2.
\end{array}\right.\right\}.
\end{equation*}
We also denote $\cL_T^0=\{\phi\in \cL_T\, :\, \phi(T)=0\}$.
\begin{definition}\label{de:solution}
{\rm
A pair of vector functions $(v(t);u(t))$ with $v=(v^1;v^2;v^3)$ and
$u=(u^1;u^2;w)$
is said to be  a weak solution to
the problem in
\eqref{fl.1}--\eqref{Com-con}  on a time interval $[0,T]$ if
\begin{itemize}
    \item $v\in L_\infty(0,T;X)\bigcap L_2(0,T; V)$;
\item $u \in L_\infty(0,T; H^1_0(\Om)\times H^1_0(\Om)\times H^2_0(\Om))$;
  \item $u_t \in L_2(0,T; \big[H^{1/2}_*(\Om)\big]^3)$ and
  the compatibility condition  $v(t)|_\Om=(u^1_t;u^2_t;w_t)(t)$ holds for almost all $t\in [0,T]$;
\item
 $(\vr u^1_t; \vr u^2_t; w_t) \in L_\infty(0,T; L_2(\Om) \times L_2(\Om)\times H_\alpha)$  and   $u(0)=u_0$;
    \item for every $\phi\in \cL_T^0$  with $\phi|_\Om=b=(b^1;b^2;d)$ the following equality holds:
        \begin{multline}
            -\!\int_0^T\!\!(v,\phi_t)_\cO dt +\nu\!\int_0^T\!\!E(v,\phi)_{\cO}  dt -\!\int_0^T\!\! \big[(M_\al w_t,d_t)_\Om+\vr (\bu_t,\bar{b}_t)_\Om\big]dt
             \\ +
            \ga\int_0^T (M_\al w_t, d)_{\Om}dt+ \!\int_0^T\!\!(\De w, \De d)_\Om dt +  \!\int_0^T\!\!a(\bu, \bar{b})_\Om dt  + \!\int_0^T\!\! q(u,b)_\Om dt \\
          =  (v_0, \phi(0))_{\cO}
          +(M_\al w_1,d(0))_\Om+\vr (\bu_1,\bar{b}(0))_\Om
          \\
          +\int_0^T(G_f(t), \phi)_{\cO} dt +\int_0^T(G_{sh}(t),b)_\Om dt , \label{weak_sol_def}
        \end{multline}
    where  $\bu=(u^1;u^2)$, $\bar{b}=(b^1;b^2)$ and also
    \begin{align}
    E(u,\phi)=&\frac 12 \sum_{i,j=1}^3\left(v^j_{x_i} + v^i_{x_j} \right)\left(\phi^j_{x_i} + \phi^i_{x_j} \right), \notag\\
    a(\bu,\bar{b})=&\sum_{i=1}^2 (\nabla u^i,\nabla b^i)_\Om +\frac{1+\mu}{1-\mu}({\rm div} u, {\rm div} b)_\Om, \label{form-a} \\
    q(u,b)=&
    (k_1 N_{11}+k_2 N_{22}, d)_\Om \notag
    \\ & +
    (N_{11}w_{x_1} + N_{12} w_{x_2}, d_{x_1})_\Om +(N_{12}w_{x_1} + N_{22} w_{x_2}, d_{x_2})_\Om
    \notag \\
    &+\frac 1{1-\mu} \left[((w_{x_1})^2+\mu (w_{x_2})^2, b^1_{x_1})_\Om + ((w_{x_2})^2+\mu (w_{x_1})^2, b^2_{x_2})_\Om \right]
     \notag\\ &
    +(w_{x_1}w_{x_2}, b^1_{x_2}+b^2_{x_1})_\Om+ \frac 2{1-\mu}
     (w, \kappa_1b^1_{x_1}+\kappa_1b^2_{x_2})_\Om. \notag
    \end{align}
   \end{itemize}
}
\end{definition}
\begin{remark}\label{re:weak-sol}
{\rm {\bf (1)}
In the case  when we neglect the inertia of longitudinal deformations
($\vr=0$) the equations in \eqref{u1u2-eq} (or in \eqref{u12-eq})
become elliptic. However we keep the initial data  for the in-plane displacement
$(u^1;u^2)$. The point is that the first order evolution for $(u^1;u^2)$
goes from the boundary  condition for the fluid velocity in \eqref{fl.4}.
\par
{\bf (2)}
 It also follows from \eqref{fl.4}  and from
the standard trace theorem that
for every weak solution $(v(t);u(t))$ we have  that
\begin{equation}\label{smooth_est}
||w(t)||^2_{H^{1/2}_*(\Om)}+||u_t^1(t)||^2_{H^{1/2}_*(\Om)}+||u_t^2(t)||^2_{H^{1/2}_*(\Om)} \le C||\g v(t)||^2_\cO
\end{equation}
for almost all $t\in [0,T]$.
This estimate does not depend on $\vr\ge0$ and provide us an additional regularity estimate for in-plane shell velocities even in the case $\vr=0$.
Below we use this observation to suggest unified way to prove well-posedness
result not distinguishing cases $\vr>0$ and $\vr=0$
(in contrast with \cite{Vor1957}, see also \cite{Sed91b} ($\vr=0$)
and \cite{Sed_1997} ($\vr>0$)).
\par
{\bf (3)} Taking in (\ref{weak_sol_def})
 $\phi(t)=\int_t^T\chi(\tau)d\tau\cdot \psi$, where $\chi$
is a smooth scalar function  and $\psi$ belongs to the space
\begin{equation}\label{space-W}
\widetilde{V}=\left\{
\psi\in  V \left|  \;
  \psi|_\Om=
\beta\equiv(\beta^1;\beta^2;\delta)\in H^1_0(\Om) \times H^1_0(\Om) \times \Hto \right. \right\},
\end{equation}
one see that the weak solution $(v(t);u(t))$  satisfies the relation
       \begin{multline}
           (v(t),\psi)_\cO
+(M_\al w_t(t),\delta)_\Om+\vr (\bu_t(t),\bar{\beta})_\Om \\
 = (v_0, \psi)_{\cO} + (M_\al w_1,\delta)_\Om+\vr (\bu_1,\bar{\beta})_\Om  \\
-\int_0^t\big[
 \nu E(v,\psi) +(\De w, \De \delta)_\Om + a(\bu, \bar{\beta})+\ga( M_\al w_t, \delta)_{\Om} \\
   + q(u, \beta)  -  (G_f, \psi)_{\cO}  -(G_{sh},\beta)_\Om\big] d\tau  \label{weak_sol_d2}
        \end{multline}
for  all $t\in [0,T]$ and
$\psi\in \widetilde{V}$  with $\psi\big\vert_\Om=\beta=(\beta^1;\beta^2;\delta)$
and $\bar{\beta}=(\beta^1;\beta^2)$.
}
\end{remark}
As a phase space we  use
\begin{equation}\label{space-cH}
\cH=
\left\{
\begin{aligned}
\left\{ (v_0;u_0;u_1)\in X\times W\times Y :\; v_0=u_1
~\mbox{on}~ \Om\right\}, \qquad \vr >0, \\
\left\{ (v_0;u_0;w_1)\in X\times W\times H_\alpha :\; (v_0)^3=w_1
~\mbox{on}~ \Om\right\}\qquad \vr=0
\end{aligned}
\right.
\end{equation}
with the norm
\[
\|(v_0;u_0;u_1)\|_\cH^2=\|v_0\|^2_{\cO}+\|u_0\|^2_{W}+\|w_1\|^2_{H_\alpha} + \vr||(u_1^1, u_1^2)||_\Om^2,
\]
where $W$ and $Y$ are given by \eqref{WY-space}.
 We also denote by $\hch$ a subspace in
$\cH$ of the form
\begin{equation*}
\hch= \left\{ (v_0;u_0;u_1)\in \cH :\; w_0\in \wH^2_0(\Om)\right\},
\end{equation*}
where $w_0$ is the third component of the initial displacement
vector $u_0$.
\par
Our main result in this section is the following well-posedness
theorem.
\begin{theorem} \label{th:WP}
Assume that $U_0=(v_0;u_0;u_1)\in \cH$, $G_f(t)\in L_2(0,T; V')$,  $G_{sh}(t)\in L_2\big(0,T; \left[H^{-1/2}(\Om)\right]^2\times H^{-1}(\Om)\big)$, $\ga \ge 0$. We also assume that $\vr\ge0$ (in the case $\vr=0$ the data $\bu_1$ are not fixed).  Then
 for any interval $[0,T]$
there exists a unique weak solution $(v(t); u(t))$ to
\eqref{fl.1}--\eqref{Com-con}
 with the initial data $U_0$. This solution possesses the following properties:
\begin{itemize}
\item In the case $\vr>0$ we have
\begin{equation}\label{cont-ws}
U(t;U_0)\equiv U(t)\equiv (v(t); u(t); u_t(t))\in C(0,T; X\times W \times Y),
\end{equation}
If $\vr=0$, then
\begin{equation}\label{cont-ws-0}
U(t;U_0)\equiv U(t)\equiv (v(t); u(t); w_t(t))\in C(0,T; X\times W \times H_\al).
\end{equation}
\item
The solutions depends continuously (both in strong and weak sense) on initial data.
More precisely,  if $\vr>0$ and $U_n\to U_0$ in the norm of $\cH$
(resp.\ weakly in $\cH$), then  $U(t;U_n)\to U(t;U_0)$ strongly (resp.\ weakly) in $\cH$
for each $t>0$.
In the case $\vr=0$  the corresponding convergence take place in $ X\times W \times H_\al$.
\item
The energy balance equality
\begin{multline}\label{lin_energy}
\cE(v(t), u(t), u_t(t))+\nu \int_0^t E(v,v) d\tau + \ga \int_0^t ||w_t||^2_{H_\alpha} d\tau \\ =\cE(v_0, u_0, u_1)
+\int_0^t(G_f,  v)_\cO d\tau +\int_0^t (G_{sh},u_t)_\Om d\tau
\end{multline}
is valid
for every $t>0$, where the energy functional $\cE$ is defined
by the relation
\begin{equation}\label{en-def}
\cE(v, u, u_t)=\frac12\left[\|v\|^2_\cO+ \|M_\al^{1/2}w_t\|^2_\Om+
\vr \|\bu_t\|^2_\Om+\| \De w\|_\Om^2 +Q(u)\right]
\end{equation}
with
\begin{align}\label{Q-def}
Q(u)=& \frac 2{(1-\mu)}\int_\Om \left( \eps_{11}^2 + \eps_{22}^2 +2\mu \eps_{11}\eps_{22} + \hf (1-\mu)\eps_{12}^2 \right)
\notag \\
= &\frac 1{2(1+\mu)}\int_\Om \left( N_{11}^2 + N_{22}^2 -2\mu N_{11}N_{22} + 2(1+\mu)N_{12}^2 \right).
\end{align}
\end{itemize}
\end{theorem}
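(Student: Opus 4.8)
The plan is to prove Theorem~\ref{th:WP} by a Galerkin approximation combined with the regularizing effect of the Stokes operator (Proposition~\ref{pr:stokes}), and then to pass to the limit using compactness. I would proceed in the following steps.

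\textbf{Step 1: Reduction to a system on the fluid component.} First I would use the operator $N_0$ from Proposition~\ref{pr:stokes} to ``lift'' the boundary coupling. Writing $v=v_0(t)+N_0(u_t^1;u_t^2;w_t)$ with $v_0$ having zero trace on $\Om$ is not quite the right move here since $u_t$ is an unknown; instead, the cleaner route is to keep the coupled variational formulation \eqref{weak_sol_d2} and build a Galerkin basis adapted to the constraint $v|_\Om=(u_t^1;u_t^2;w_t)$. Concretely, I would choose a basis $\{\psi_k\}\subset\widetilde V$ (the space in \eqref{space-W}) whose traces $\beta_k=(\beta_k^1;\beta_k^2;\delta_k)$ on $\Om$ are dense in $H^1_0(\Om)\times H^1_0(\Om)\times\Hto$, so that the approximate solutions automatically satisfy the compatibility condition. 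The approximate problem is then a system of ODEs in the coefficients; local existence is standard, and the energy balance \eqref{lin_energy} at the Galerkin level (obtained by testing with the solution itself, which is legitimate for finite-dimensional approximations) gives the a priori bounds needed to continue the solution to all of $[0,T]$ and to extract weakly-$*$ convergent subsequences in $L_\infty(0,T;X)\cap L_2(0,T;V)$ for $v$ and in $L_\infty(0,T;W)$ for $u$.

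\textbf{Step 2: A priori estimates and the role of $M_\al$.} The energy identity controls $\|v\|_{L_2(0,T;V)}$, $\|M_\al^{1/2}w_t\|_\Om$ (hence $\|w_t\|_{\Ha}$, i.e.\ $w_t\in L_\infty(0,T;H^1_0(\Om))$ because $\al>0$), $\|\De w\|_\Om$, and $\sqrt\vr\,\|\bu_t\|_\Om$; the nonlinear term $Q(u)$ is handled by noting $Q(u)\ge0$ up to lower-order terms absorbable via the curvature terms and Poincaré/interpolation on $H^2_0(\Om)$. The crucial extra regularity for $u_t$ in-plane, uniformly in $\vr\ge0$, comes from the trace estimate \eqref{smooth_est}: since $v\in L_2(0,T;V)$ we get $u_t^j\in L_2(0,T;H^{1/2}_*(\Om))$, $j=1,2$, and $w_t\in L_2(0,T;H^{1/2}_*(\Om))$ as well — this is what permits the unified treatment of $\vr>0$ and $\vr=0$ and is essential for identifying the nonlinear limits. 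For the Aubin--Lions compactness I also need time-derivative bounds: testing the fluid equation against $\widetilde V$ gives $v_t\in L_2(0,T;\widetilde V')$ and, via the structure of \eqref{weak_sol_d2}, bounds on $\partial_t(M_\al w_t)$ and (when $\vr>0$) on $\vr\bu_{tt}$ in suitable negative-order spaces.

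\textbf{Step 3: Passage to the limit and the nonlinear terms.} Using Aubin--Lions I would upgrade weak convergence to strong convergence of $w$ in $L_2(0,T;H^{2-\delta}(\Om))$ and of $w_t$ in $L_2(0,T;L_2(\Om))$, and strong convergence of $\bu$ in $L_2(0,T;H^{1-\delta}(\Om))$. Since the nonlinearities in $q(u,b)$ and $N_{ij}$ are quadratic in $\nabla w$ and linear in $\nabla\bu$ and $w$, these convergences suffice to pass to the limit in every term of \eqref{weak_sol_def}; in particular the products $w_{x_i}w_{x_j}$ converge strongly in $L_2(0,T;L_{3/2})$ or better by the Sobolev embedding $H^{1}(\Om)\hookrightarrow L_q$, $q<\infty$, in two dimensions. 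This establishes existence of a weak solution; the continuity-in-time properties \eqref{cont-ws}, \eqref{cont-ws-0} follow from the standard argument that a function which is weakly continuous into a space and has constant norm (via the energy balance, once established for the limit) is strongly continuous, together with the $v_t$ bound giving $v\in C(0,T;X_{\text{weak}})$ upgraded to $C(0,T;X)$.

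\textbf{Step 4: Energy balance and uniqueness — the main obstacle.} The energy \emph{inequality} ($\le$) for the limit solution comes for free from weak lower semicontinuity; the \emph{equality} in \eqref{lin_energy} requires showing the limit solution is a legitimate test function for itself, which is delicate because $v_t$ only lies in $\widetilde V'$ and $w_{tt}$ in a negative space. The standard fix is a mollification-in-time argument (Lions--Magenes type), using the already-established regularity of $w_t$ in $H^1_0(\Om)$. The genuinely hard part of the whole theorem, however, is \textbf{uniqueness}. The obstruction is that the nonlinear von Karman terms are \emph{not} locally Lipschitz from the finite-energy space into the dual of the test space — the difference of two solutions satisfies an equation whose right-hand side involves $\nabla w\cdot\nabla(w_1-w_2)$ type terms that cannot be controlled by the natural energy norm of the difference alone. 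Here I would invoke \textbf{Sedenko's method} as announced in the introduction: one estimates the difference $z=w_1-w_2$ in a \emph{weaker} norm but exploits the a priori $H^1$-bound on $w_t$ (which is available precisely because $M_\al=1-\al\De$ with $\al>0$) to close a Gronwall-type argument — typically by testing the difference equation with $M_\al^{-1}$ of a suitable quantity and using the logarithmic/Brézis--Gallouet inequality in two dimensions, or Sedenko's change of the time variable, to absorb the supercritical nonlinear term. I expect this uniqueness argument, and the careful bookkeeping to make it work uniformly for $\vr\ge0$, to be the principal technical burden; continuous dependence on the data (both strong and weak) then follows from the same difference estimates combined with the energy balance.
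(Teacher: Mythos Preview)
Your overall strategy---Galerkin approximation with a basis adapted to the trace constraint, energy estimate via Proposition~\ref{pr:Korn} and the trace bound \eqref{smooth_est}, Aubin--Lions compactness, then Sedenko's method for uniqueness---matches the paper's structure closely, and your identification of the $H^1$-regularity of $w_t$ coming from $M_\al$ as the hinge of the uniqueness argument is exactly right.

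There is, however, a real gap in your Step~4. Your description of Sedenko's method (``testing the difference equation with $M_\al^{-1}$ of a suitable quantity'' or ``Sedenko's change of the time variable'') does not correspond to what actually makes the argument close here. The paper's device---following \cite{Sed_1997}---is to test the difference equation \eqref{weak_sol-dif} not with the velocity difference itself but with its \emph{time integral} $\psi=i[v](s)=\int_0^s v(\sigma)\,d\sigma$, whose trace on $\Om$ is the displacement difference $u(s)$, and then integrate once more in $s$ from $0$ to $t$. This produces a closed inequality for the functional
\[
\xi(t)=\|i[v](t)\|_\cO^2+\|M_\al^{1/2}w(t)\|_\Om^2+\|\De i[w](t)\|_\Om^2+a(i[\bu](t),i[\bu](t))+\ldots,
\]
and the nonlinear difference terms become double time-integrals that can be estimated term by term (the paper splits them into $I_1,\ldots,I_4$). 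Only at this stage does the Br\'esis--Gallouet inequality \eqref{BG-ineq} enter, yielding an estimate of the form $\xi(t)\le C(1+\sigma)^{-1}+C\log(1+\sigma)\int_0^t\xi$, from which $\xi\equiv 0$ follows. Testing with $M_\al^{-1}$ of anything would not handle the fluid component $v$, whose time derivative lives only in $V_0'$; the time-integral test is precisely what converts the hyperbolic-type structure into something Gronwall-able. You should make this explicit.

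Two smaller discrepancies. First, the paper obtains the $\vr=0$ existence not directly but by passing to the limit $\vr\to 0$ in the already-constructed $\vr>0$ solutions, using the uniform-in-$\vr$ bound \eqref{a-pri-ro}; your outline suggests a single construction for all $\vr\ge 0$. Second, for the energy \emph{equality} the paper uses the Koch--Lasiecka finite-difference operator $D_h$ (as in \cite{KochLa_2002}) rather than a mollification; and for continuous dependence the paper does \emph{not} reuse the Sedenko difference estimate (which only yields $\xi\equiv 0$, not a stability bound) but argues separately: weak continuity via compactness plus uniqueness, then strong continuity by splitting $Q(u)=Q_0(u)+\mathrm{Comp}(u)$ as in \eqref{q-q0} and showing $\cE_0(U^n(t))\to\cE_0(U(t))$ from the energy equality and weak lower semicontinuity. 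Your claim that ``continuous dependence\ldots follows from the same difference estimates'' is therefore not how it is done here and would require additional work to justify.
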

\subsection*{Proof of Theorem~\ref{th:WP}}
We start with  the following elliptic type property of
the functional $Q(u)$.
\begin{proposition}\label{pr:Korn}
There exists a positive constant $C$ such that for every
 $u=(u^1;u^2;w)\in W= H_0^1(\Om)\times H_0^1(\Om)\times H_0^2(\Om)$
 we have that
\begin{equation}
||u^1||_{1,\Om}^2+||u^2||_{1,\Om}^2 \le C\left[ Q(u)
+\|w\|^2_\Om+\|w\|^4_{3/2,\Om}\right].
\label{Q_best}
\end{equation}
\end{proposition}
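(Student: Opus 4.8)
The plan is to bound the $H^1$-norms of the in-plane displacements $u^1,u^2$ by the elastic energy $Q(u)$ plus controllable terms in $w$, using a Korn-type inequality. First I would recall the structure of $Q(u)$: from \eqref{Q-def} it controls the $L_2$-norms of the deformation tensor components $\eps_{11},\eps_{22},\eps_{12}$, i.e.
\[
Q(u)\ge c\left(\|\eps_{11}\|^2_\Om+\|\eps_{22}\|^2_\Om+\|\eps_{12}\|^2_\Om\right)
\]
for some $c>0$ depending only on $\mu\in(0,1/2)$ (the quadratic form under the integral is positive definite in $(\eps_{11},\eps_{22},\eps_{12})$ since $|\mu|<1$). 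Next, the idea is to separate the linear-in-$\bu$ part of the strains from the rest. Writing $\eps_0(\bu)=\tfrac12(\g\bu+\g^T\bu)$ for the linearized strain of the in-plane field, we have
\[
(\eps_0(\bu))_{11}=\eps_{11}-k_1w-\tfrac12(w_{x_1})^2,\quad
(\eps_0(\bu))_{22}=\eps_{22}-k_2w-\tfrac12(w_{x_2})^2,\quad
(\eps_0(\bu))_{12}=\tfrac12\left(\eps_{12}-w_{x_1}w_{x_2}\right).
\]
Hence $\|\eps_0(\bu)\|_\Om$ is bounded by $\|\eps_{11}\|_\Om+\|\eps_{22}\|_\Om+\|\eps_{12}\|_\Om$ plus terms of the form $\|k_iw\|_\Om$ and $\|(w_{x_i})^2\|_\Om$, $\|w_{x_1}w_{x_2}\|_\Om$.

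The second main step is to invoke Korn's inequality on the space $[H^1_0(\Om)]^2$: since $\bu=(u^1,u^2)$ vanishes on $\pd\Om$ (clamped boundary conditions \eqref{plBC}, embedded in the definition of $W$), the first Korn inequality gives
\[
\|u^1\|^2_{1,\Om}+\|u^2\|^2_{1,\Om}\le C\|\eps_0(\bu)\|^2_\Om.
\]
Combining with the previous step,
\[
\|u^1\|^2_{1,\Om}+\|u^2\|^2_{1,\Om}\le C\left(Q(u)+\sum_{i}\|k_iw\|^2_\Om+\||\g w|^2\|^2_\Om\right).
\]
It then remains to estimate $\|k_iw\|_\Om$ and $\||\g w|^2\|_\Om$. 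Since $k_1,k_2$ are smooth on $\overline\Om$, $\|k_iw\|^2_\Om\le C\|w\|^2_\Om$, giving the $\|w\|^2_\Om$ term. For the nonlinear term, $\||\g w|^2\|^2_\Om=\|\,|\g w|\,\|^4_{L_4(\Om)}=\|\g w\|^4_{L_4(\Om)}$, and by the Sobolev embedding $H^{1/2}(\Om)\hookrightarrow L_4(\Om)$ in two dimensions we get $\|\g w\|_{L_4(\Om)}\le C\|\g w\|_{1/2,\Om}\le C\|w\|_{3/2,\Om}$, which yields the $\|w\|^4_{3/2,\Om}$ term and completes the bound \eqref{Q_best}.

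The only delicate point is making the positive-definiteness of the strain energy quadratic form completely explicit and being careful that the Korn constant depends only on $\Om$ (not on $u$). The Sobolev step is a standard two-dimensional embedding and the smoothness of the curvatures $k_i$ handles the lower-order term routinely; I expect no genuine obstacle there. I would write the argument in the order: (i) positivity of the quadratic form so that $Q(u)$ dominates $\|\eps_{ij}\|^2_\Om$; (ii) algebraic identities expressing $\eps_0(\bu)$ via the $\eps_{ij}$ and the $w$-dependent corrections; (iii) Korn's inequality on $[H^1_0(\Om)]^2$; (iv) the embedding $H^{1/2}(\Om)\hookrightarrow L_4(\Om)$ for the quartic term; then collect everything.
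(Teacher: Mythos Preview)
Your proposal is correct and follows essentially the same route as the paper: both arguments extract the linearized strain $\eps_0(\bu)$ from the full strains $\eps_{ij}$, control the $w$-dependent remainders via the embedding $H^{1/2}(\Om)\hookrightarrow L_4(\Om)$ (giving the $\|w\|^4_{3/2,\Om}$ term) and the smoothness of $k_i$ (giving the $\|w\|^2_\Om$ term), and then apply the Korn inequality on $[H^1_0(\Om)]^2$. The paper's write-up is slightly more compressed---it records the elementary pointwise inequalities $[u^i_{x_i}]^2\le 2\eps_{ii}^2+2[k_iw+\tfrac12(w_{x_i})^2]^2$ and $[u^1_{x_2}+u^2_{x_1}]^2\le 2\eps_{12}^2+2[w_{x_1}w_{x_2}]^2$ directly rather than first isolating the positive-definiteness of the quadratic form---but the substance is identical.
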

\begin{proof}
One can see that
\[
\left[u^i_{x_i}\right]^2\le 2\eps^2_{ii}+2\left[k_iw +\hf
\left(w_{x_i}\right)^2\right]^2,\quad
\left[u^1_{x_2}+u^2_{x_1} \right]^2\le 2\eps^2_{12}+
2\left[w_{x_1}w_{x_2} \right]^2.
\]
Therefore using the embedding $H^{1/2}(\Om)\subset L_4(\Om)$ we obtain
that
\[
\int_\Om \left[ \left(u^1_{x_1}\right)^2+\left(u^1_{x_1}\right)^2
+\left(u^1_{x_2}+u^2_{x_1} \right)^2\right]dx'\le C\left[ Q(u)
+\|w\|^2_\Om+\|w\|^4_{3/2,\Om}\right].
\]
Thus, property \eqref{Q_best} follows from the
Korn inequality (see, e.g., Theorem~3.1 in Chapter 3 of \cite{DuLions}).
\end{proof}

Now
we use the compactness method and  split the argument
into  several steps.
\smallskip\par\noindent
{\em Step 1. Existence of an approximate solution for the case $\vr>0$.} For
the construction of Galerkin's approximations in this case  we use
the same idea as in \cite{ChuRyz2011} (which was inspired by the method developed
in \cite{CDEG05}
for  the case of a linear plate interacting with
nonlinear Navier-Stokes equations).
\par
Let $\{\psi_i\}_{i\in \N}$ be  the orthonormal basis in $\widetilde X
=\{v\in X : v\big\vert_\Om=0\}$
consisting of the eigenvectors of the Stokes problem:
\begin{gather*}
-\De \psi_i +\nabla p_i =\mu_i\psi_i  \quad \mbox{in} \; \cO,~~~
{\rm div}\psi_i=0, \quad \psi_i|_{\pd\cO}=0.
\end{gather*}
Here  $0<\mu_1\le \mu_2\le \cdots$ are the corresponding eigenvalues.
 Denote by $\{\xi_i\}_{i\in\N}$ the basis in $\Hto$
which consists of the eigenfunctions of the following problem
\[
(\De \xi_i,\De w)_\Om=\hat{\kappa}_i (M_\alpha\xi_i, w)_\Om,~~~\forall\, w\in \wH^2_0(\Om),
\]
with the eigenvalues $0<\hat{\kappa}_1\le\hat{\kappa}_2\le\ldots$ and such that $(M_\alpha\xi_i,\xi_j)_{\Om}=\delta_{ij}$. Further, let $\{\eta_i\}_{i\in\N}$ be
 the basis in $H^1_0(\Om) \times H^1_0(\Om)$
which consists of eigenfunctions of the problem
\begin{equation*}
a(\eta_i,w)= \tilde{\kappa}_i (\eta_i, w)_\Om, \qquad
\forall\, w\in H^1_0(\Om)\times H^1_0(\Om),
\end{equation*}
with the eigenvalues $0<\tilde{\kappa}_1\le\tilde{\kappa}_2\le\ldots$ and $||\eta_i||_{\Om}=1$. The form $a(\eta,w)$ is given by \eqref{form-a}.
\par
Let  $\hp_i=N_0(0;0;\xi_i)$ and $\tp_i=N_0(\eta_i;0)$,  where the operator $N_0$
is defined by (\ref{fl.n0}).
 One can see that $\eta_i \in [H^{3/2-\delta}(\Om)]^2$. Therefore
 by Proposition~\ref{pr:stokes} we have that $\hp_i, \tp_i \in [H^{2-\delta}(\cO)]^3\cap V$ for every $\delta>0$.
\par
In what follows we suppose $\phi_i=\tp_i$, $\zeta_i=(\eta_i;0)$, $\kappa_i= \tilde{\kappa}_i$, and $\ga_i=0$;  $\phi_{n+i}=\hp_i$, $\zeta_{n+i}=(0,0,\xi_i)$, $\kappa_{n+i}=\hat{\kappa}_i$, and $\ga_{n+i}=\ga$  for $i=1,\ldots,n$. We also define the parameter
 $\vr_i$ by the relations:  $\vr_i=\vr$  for $i=1,\ldots,n$ and  $\vr_i=1$  for $i=n+1,\ldots,2n$.
 \par
We define an approximate solution as a pair of functions $(v_{n,m}; u_n)$:
\begin{gather}
v_{n,m}(t)=\sum_{i=1}^m \alpha_i(t)\psi_i +\sum_{j=1}^{2n} \dot{\beta}_j(t)\phi_j, \notag 
\\  u_n(t)=\sum_{j=1}^{2n}\beta_j(t)\zeta_j+ (0;0;(I-\widehat{P})w_0), \label{approx_sol}
\end{gather}
which satisfy the relations
\begin{equation}
\dot{\alpha}_k(t) +\sum_{j=1}^{2n} \ddot{\beta}_j(t)(\phi_j,\psi_k)_\cO+\nu\mu_k \alpha_k(t)+\nu\sum_{j=1}^{2n} \dot{\beta}_j(t)E(\phi_j,\psi_k)_\cO
=(G_f, \psi_k)_\cO  \label{psi_eq}
\end{equation}
for $k=1...m$, and
\begin{multline}
\sum_{i=1}^m \dot{\alpha}_i(t)(\psi_i, \phi_k)_\cO +\sum_{j=1}^{2n} \ddot{\beta}_j(t)(\phi_j,\phi_k)_\cO+\vr_k\ddot{\beta}_k(t)    \\ +
\nu\sum_{i=1}^m  \alpha_i(t)E(\psi_i, \phi_k)_\cO +
\nu\sum_{j=1}^{2n} \dot{\beta}_j(t) E(\phi_j,\phi_k)_\cO
+\kappa_k \beta_k(t) +  \\
+\ga_k\dot{\beta}_k(t)   +q(u_n(t), \zeta_k) =
(G_f(t), \phi_k)_\cO +(G_{sh}(t), \zeta_k)_\Om  \label{phi_eq}
\end{multline}
for $k=1,\dots,2n$.
This system of ordinary differential equations is endowed
with the initial data
\[
v_{v,m}(0)=\Pi_m(v_0-N_0(0;0;w_1))+N_0(0;0;P_n w_1),
\]
\[
 u_n(0)=(R_n(u^1_0; u^2_0); P_n\widehat{P}w_0 + (I-\widehat{P})w_0), \; \dot{u}_n(0)=(R_n(u^1_1; u^2_1); P_n w_1),
\]
 where $\Pi_m$  is  the
 orthoprojector  on $Lin\{\psi_j : j=1,\ldots,m,\}$ in $\widetilde{X}$,  $P_n$
is orthoprojector on
$Lin\{\xi_i : i=1,\ldots,n\}$ in $\Lto$ and $R_n$
is orthoprojector on
$Lin\{\eta_i : i=1,\ldots,n\}$ in $L_2(\Om)\times L_2(\Om)$.
Since $\Pi_m$, $R_n$
and $P_n$  are spectral projectors we have that
\begin{equation}\label{id-conv}
(v_{v,m}(0);u_n(0);  \dot{u}_n(0))\to  (v_0;u_0;u_1)~~
\mbox{strongly in $\cH$ as $m,n\to\infty$.}
\end{equation}

\par
We can rewrite system \eqref{psi_eq} and \eqref{phi_eq} as
\begin{equation*}
M\frac{d}{dt}\left(\begin{matrix}\alpha(t) \\ \dot{\beta}(t)\end{matrix}\right) +g(\alpha(t), \beta(t), \dot{\beta}(t))=G(t)
\end{equation*}
for some locally Lipschitz  function $g\,: \R^{m+4n}\mapsto \R^{m+2n}$, where
the function $G$ lies in $L_2(0,T; \R^{m+2n})$ and
\begin{equation}\label{matr-M}
M=
\left[\begin{matrix}0&0\\0&  \sR\end{matrix}\right]+
\left[\begin{matrix} \{(\psi_i, \psi_j)_\cO\}_{j,k=1}^m & \{(\psi_l, \phi_k)_\cO\}_{l,k=1}^{m,2n} \\ \{(\phi_k, \psi_l)_\cO\}_{l,k=1}^{2n,m} &
\{ (\phi_i, \phi_j))_\cO\}_{j,k=1}^{2n}\end{matrix}\right],
\end{equation}
where $\sR={\rm diag}\{\vr_1,\ldots,\vr_{2n}\}$.
The first matrix in (\ref{matr-M}) is nonnegative and
the second one is symmetric and   strictly  positive
(since the functions $\{\psi_i, \phi_j :i=1,\ldots,m, j=1,\ldots,2n\}$ are linearly independent).
Therefore  system \eqref{psi_eq} and \eqref{phi_eq} has a unique  solution
on some time interval $[0,T']$.
\par
It follows from (\ref{approx_sol})  that
\[
v_{n,m}(t)=\sum_{i=1}^m \alpha_i(t)\psi_i + N_0[\pd_t u_n(t)],
\]
where $N_0$ is given by (\ref{fl.n0}). This implies
the following boundary compatibility condition
\begin{equation}\label{nm-comp}
v_{n,m}(t)=\pd_t u_n(t)~~ \mbox{on}~~ \Om.
\end{equation}
\par
{\em Step 2. A priori estimate
 and limit transition, $\vr>0$.}
Multiplying \eqref{psi_eq} by $\al_k(t)$ and \eqref{phi_eq} by
$\dot\beta_k(t)$, after summation we obtain an energy relation
of the form \eqref{lin_energy} for the approximate solutions
$(v_{n,m}; u_n)$
(for a similar argument we refer to
 \cite{ChuRyz2011} and also to the classical source \cite{Vor1957}  on (non-interacting)
 shell evolution).
 By Proposition~\ref{pr:Korn} and relation \eqref{smooth_est} this implies the following  a priori estimate:
 \begin{multline}\label{a-pri0}
 \sup_{t\in [0,T]}\|v_{n,m}(t)\|_\cO^2 + \\
 \sup_{t\in [0,T]}\left[\|M^{1/2}_\al \pd_t w_{n}(t)\|_\Om^2+ \vr \| \pd_t \bu_{n}(t)\|_{\Om}^2
 +\|\De w_n(t)\|^2_\Om+ \|\bu_n(t)\|_{1,\Om}^2\right]  \\
+\int_0^T\|\nabla v_{n,m}(t)\|_\Om^2 dt +  \int_0^T \| \pd_t \bu_{n}(t)\|_{[H_*^{1/2}(\Om)]^2}^2 dt
 \le C_T
 \end{multline}
 for any existence interval $[0,T]$ of approximate solutions,
 where the constant $C_T$ does not depend on $n$ and $m$.
 In particular, this implies that
  any approximate solution  can be extended on any time interval by the standard procedure, i.e., the solution is global.
  It also follows from \eqref{a-pri0} that the sequence  $\{(v_{n,m}; u_n; \pd_t u_n)\}$ contains a subsequence  such that
\begin{align}
&(v_{n,m}; u_n; \pd_t u_n) \rightharpoonup (v; u; \pd_t u) \quad \ast\mbox{-weakly in } L_\infty(0,T;\cH),\label{uv-conv}
 \\
&v_{n,m} \rightharpoonup v \quad \mbox{weakly in } L_2(0,T;V).   \label{v_conv}
\end{align}
Moreover, by the Aubin-Dubinsky  theorem
(see, e.g., \cite[Corollary~4]{sim}) we can assert that
\begin{align}
&u_n \rightarrow u \quad \mbox{strongly in } C(0,T; H^{1-\eps}_0(\Om)\times H^{1-\eps}_0(\Om)\times H^{2-\e}_0(\Om))
\label{u-strong}
\end{align}
for every $\eps>0$. Besides, the standard trace theorem yields
\begin{equation}   \label{v_conv-b}
v_{n,m} \rightharpoonup v \quad \mbox{weakly in } L_2\left(0,T; \left[H^{1/2}(\partial\cO)\right]^3\right),
\end{equation}
thus, we have
\begin{equation}\label{ut-conv}
\pd_t u_n \rightharpoonup \pd_t u \quad \mbox{weakly in } L_2(0,T; H^{1/2}_*(\Om)).
\end{equation}

One can  see  that
$(v_{n,m}; u_n; \pd_t u_n)(t)$ satisfies (\ref{weak_sol_def}) with
the test function $\phi$ of the form
\begin{equation}\label{phi-pq}
\phi=\phi_{p,q}=\sum_{i=1}^p \gamma_i(t)\psi_i
+\sum_{j=1}^q\delta_j(t)\hat\phi_j
+\sum_{j=1}^q\eta_j(t)\tilde\phi_j,
\end{equation}
where $p\le m$, $q\le n$ and $\gamma_i$, $\delta_j$, $\eta_j$
are scalar absolutely continuous functions on $[0,T]$  with time derivatives
from $L_2(0,T)$, such that $\gamma_i(T)=\delta_j(T)=\eta_j(T)=0$. Using (\ref{uv-conv}) and
(\ref{v_conv}), we can easily pass to the limit in linear terms.  Limit transition in the nonlinear terms can be done the same way  as in \cite{Vor1957}
(see also \cite{puel-AMO96}  or  \cite{KochLa_2002} for the
same type argument).
Thus, one can
show that $(v; u; \pd_t u)(t)$ satisfies (\ref{weak_sol_def})
with  $\phi=\phi_{p,q}$, where $p$ and $q$ are arbitrary.
By (\ref{id-conv}) and (\ref{u-strong}) we have $u(0)=u_0$.
The compatibility condition \eqref{fl.4} follows from  (\ref{nm-comp}),
 (\ref{ut-conv}) and (\ref{v_conv-b}).
\par
We conclude the proof of the existence of weak solutions
by showing that any function $\phi$ in $\cL_T$ can be approximated by
a sequence of functions of the form (\ref{phi-pq}), we refer to \cite{ChuRyz2011} for a similar argument in the case of zero in-plane
deformations.

This solution satisfies the energy {\em inequality}:
\begin{multline*}
\cE(v(t), u(t), u_t(t))+\nu \int_0^t ||\g v||^2_\cO d\tau + \ga \int_0^t ||w_t(\tau)||^2_{H_\alpha} d\tau \\ \le \cE(v_0, u_0, u_1)
+\int_0^t(G_f(\tau),  v)_\cO d\tau +\int_0^t (G_{sh}(\tau),u_t)_\Om d\tau
\end{multline*}
for almost all  $t>0$, where the energy functional $\cE$ is defined
by \eqref{en-def}.
 \par
{\em Step 3. Existence of weak solutions in the case $\vr=0$.}
We fix some $\bu_1$ from $L_2(\Om)$ and consider the corresponding
solution $(v_\vr(t); u_\vr(t))$
with $\vr>0$. As above,
 Proposition~\ref{pr:Korn} and relation \eqref{smooth_est} imply the following  a priori estimate:
 \begin{multline}\label{a-pri-ro}
 {\rm ess}\!\!\sup_{t\in [0,T]}\left[\|v_{\vr}(t)\|_\cO^2 +
 \|M^{1/2}_\al \pd_t w_{\vr}(t)\|_\Om^2
 +\|\De w_\vr(t) \|^2_\Om+ \|\bu_\vr(t)\|_{1,\Om}^2\right]  \\
 +\int_0^T\|\nabla v_{\vr}(t)\|_\Om^2 dt +
  \int_0^T \|\pd_t \bu_{\vr}(t)\|_{[H_*^{1/2}(\Om)]^2}^2 dt
 \le C_T
 \end{multline}
 for any  interval $[0,T]$,
 where the constant $C_T$ does not depend $\vr\in (0,1)$.
This implies that
the family  $\{(v_{\vr}; u_\vr; \pd_t u_\vr)\}$ contains a subsequence  such that
\begin{align}
&(v_{\vr}; u_\vr) \rightharpoonup (v; u) \quad \ast\mbox{-weakly in } L_\infty(0,T; X\times W),\notag 
 \\
 &\pd_t w_{\vr} \rightharpoonup \pd_t w \quad \ast\mbox{-weakly in } L_\infty(0,T; H^1_0(\Om)),\notag 
 \\
&v_{\vr} \rightharpoonup v \quad \mbox{weakly in } L_2(0,T;V), \notag  
\end{align}
when $\vr\to 0$,
and also
\begin{align*}
&v_{\vr} \rightharpoonup v \quad \mbox{weakly in } L_2(0,T; H^{1/2}(\partial\cO)),
 \notag  
   \\
&\pd_t u_\vr \rightharpoonup \pd_t u \quad \mbox{weakly in } L_2\left(0,T; \left[H^{1/2}(\partial\cO)\right]^3\right)\notag 
\end{align*}
Therefore we have that
\begin{align*}
&u_\vr \rightarrow u \quad \mbox{strongly in } C(0,T; H^{1-\eps}_0(\Om)\times H^{1-\eps}_0(\Om)\times H^{2-\e}_0(\Om))
\end{align*}
for every $\eps>0$.
This allows us to make limit transition when $\vr\to 0$ in \eqref{weak_sol_def}
and  prove the existence of weak solutions for the case $\vr=0$.
\par
{\it Step 4. Uniqueness.} We use the same idea as in \cite{Sed_1997}.
However the additional regularity  of $\bu_t$ (see Remark~\ref{re:weak-sol}(2)) makes it possible
to cover both cases $\vr>0$ and $\vr=0$ simultaneously.
\par
Let $(\widehat{v}(t);\widehat{u}(t))$ and $(\widetilde{v}(t);\widetilde{u}(t))$
 be two  solutions to the problem in question with the same initial data. These solutions satisfy \eqref{a-pri-ro} and
 their difference  $(v(t);u(t)) =(\widehat{v}(t)-\widetilde{v}(t);\widehat{u}(t)-\widetilde{u}(t))$
 possesses properties
    \begin{align*}
      &v=(v^1;v^2;v^3)\in L_\infty(0,T;X)\bigcap L_2(0,T; V);     \\
  & u =(u^1;u^2;w)\in L_\infty(0,T; H^1_0(\Om)\times H^1_0(\Om)\times H^2_0(\Om)); \\
 & w_t\in  L_\infty(0,T;  H_\alpha),~~~
 (u^1_t;  u^2_t) \in L_2(0,T; [H^{1/2}_*(\Om)]^2);
     \end{align*}
 and by \eqref{weak_sol_d2}
  satisfies the relation
       \begin{multline}
           (v(s),\psi)_\cO
+(M_\al w_t(s),\delta)_\Om+\vr (\bu_t(s),\bar{\beta})_\Om \\
 =
-\int_0^s\big[
 \nu E(v,\psi) +(\De w, \De \delta)_\Om + a(\bu, \bar{\beta}) \\ +\ga( M_\al w_t, \delta)_{\Om}
   + q(\widehat{u}, \beta) - q(\widetilde{u}, \beta)  \big] d\tau  \label{weak_sol-dif}
        \end{multline}
for  all $s\in [0,T]$ and
$\psi\in \widetilde{V}$  with $\psi\big\vert_\Om=\beta=(\beta^1;\beta^2;\delta)$
and $\bar{\beta}=(\beta^1;\beta^2)$, where $\widetilde{V}$ is defined by \eqref{space-W}.
\par
One can see that $u(t)$ lies in $H^1_0(\Om)\times H^1_0(\Om)\times H^2_0(\Om)$
for each $t\ge 0$ and
\[
\psi= i[v](s)\equiv \int_0^sv(\si)d\si\in \tV~~\mbox{with}~~ \psi\big\vert_\Om=\beta=u(s)
~~\mbox{for each $s\in [0,T]$,}
\]
where we have introduced the notation
 \begin{equation*}
 i[h](s)= \int_0^sh(\si)d\si,~~ s\in [0,T],
 \end{equation*}
 for any integrable function $h(t)$  with values in some Banach space.
\par
 Substituting this $\psi$ in \eqref{weak_sol-dif}
 for the fixed $s\in [0,T]$
 after    integration from $0$ to $t$
over $s$ we obtain that
 \begin{equation}
 \xi(t)+\vr\|\bu(t)\|_\Om^2   \le2 \left| \int_0^t ds \int_0^s d\tau \left[q(\widehat{u}(\tau), u(s))- q(\widetilde{u}(\tau), u(s))\right] \right|. \label{app_dif_est}
\end{equation}
 where
 \begin{align}
\xi(t) =& \|i[v](t) \|_\cO^2+ \|M^{1/2}_\al w(t)\|_\Om^2 +2\nu \int_0^t ds E\left(i[v](s),
i[v](s))\right) \notag  \\
& +
2\ga\int_0^t||M^{1/2}_\al w(s)||_\Om^2 ds
+\|\De i[w](t) \|_\Om^2 + a\left(i[\bu](t), i[\bu](t)\right).\notag 
\end{align}

 \par
 Now we estimate the integral term in \eqref{app_dif_est}.
 We first note that the term $q(\widehat{u}(\tau), u(s))$
 has the structure
 \[
 q(\widehat{u}(\tau), u(s))=\sum_{j=1}^4q_j(\widehat{u}(\tau), u(s)).
 \]
 Here
 \begin{itemize}
   \item $q_1(\widehat{u}, u(s))$ is a linear combination of
    terms of the form $(\varkappa_{0} \widehat{w},w(s))_\Om$ and
     $(\varkappa_{li} D_l\widehat{u}^i,w(s))_\Om$,
 where $D_l=\pd_{x_l}$ and  $\varkappa_{0}$, $\varkappa_{li}$ are smooth function, $l,i=1,2$;
   \item $q_2(\widehat{u}, u(s))$ is a linear combination of
    terms of the form
  \[
 (D_l\widehat{w}D_i\widehat{w}, w(s))_\Om,
 ~~(D_l\widehat{w}D_i\widehat{w}D_k\widehat{w}, D_mw(s))_\Om,
 ~~(\varkappa_{lm}\widehat{w} D_l\widehat{w} ,D_m w(s))_\Om;
  \]
   \item $q_3(\widehat{u}, u(s))$ is a  combination of
 the   terms  $(D_k\widehat{w},D_l\widehat{w},D_m u^i(s))_\Om$ with
  $i=1,2$;
   \item $q_4(\widehat{u}, u(s))$ consists  of
 the   terms  $(D_k\widehat{u}^i,D_l\widehat{w},D_m w(s))_\Om$ with
  $i=1,2$.
 \end{itemize}
 We also denote
 \begin{align*}
 I_m(t) &= \int_0^t ds \int_0^s d\tau \left[q_m(\widehat{u}(\tau), u(s))- q_m(\widetilde{u}(\tau), u(s))\right] \\
& = \int_0^t d\tau  \left[q_m(\widehat{u}(\tau), i[u](t)-i[u](\tau))- q_m(\widetilde{u}(\tau), i[u](t)-i[u](\tau)\right].
 \end{align*}
Below  the notation $a\sim b_i$ means that $a$ is  a linear combination of
 terms $b_i$ and $a\lesssim b_i$ means that $a$ can be estimated by  a linear combination of $b_i$.  With these notations we have
 \begin{align*}
   I_1(t)&\sim   \int_0^t ds \int_0^s d\tau \left[
   (\varkappa_{0} w(\tau),w(s))_\Om)
    +(\varkappa_{li} D_lu^j(\tau),w(s))_\Om\right] \\
    &=\int_0^t ds  \left[
   (\varkappa_{0} i[w](s),w(s))_\Om)
    +(\varkappa_{li} D_l i[{u}^j](s),w(s))_\Om\right].
 \end{align*}
This implies that
\begin{equation}\label{i1-est}
     |I_1(t)|\le C\int_0^t\xi(s)ds,~~ t\in [0,T].
\end{equation}
The most complicated term in $q_2$ is
\[
r(\widehat{w},w(s)):=
(D_l\widehat{w}D_i\widehat{w}D_k\widehat{w}, D_m w(s))_\Om.
\]
We consider
\begin{align*}
I[r]= & \int_0^t ds \int_0^s d\tau
    \left[ r(\widehat{w}(\tau),w(s))-r(\wtw(\tau),w(s))\right] \\
\sim & \int_0^t d\tau
    \left(D_lw^*(\tau)D_i w^*(\tau)D_kw(\tau),
    D_m i[w](t) -  D_m i[w](\tau) \right)_\Om,
\end{align*}
where $w^*$ is either $\widehat{w}$ or $\wtw$. Since $H^1(\Om)\subset L_p(\Om)$
for every $1\le p<\infty$
\begin{align*}
|I[r]|
\le & C \int_0^t \left[ \|\whw(\tau)\|^2_{2,\Om}+ \|\wtw(\tau)\|^2_{2,\Om}\right]
\\ & \times
\|w(\tau)\|_{1,\Om}
    \left[\| i[w](t)\|_{2,\Om} +  \|i[w](\tau)\|_{2,\Om}\right]d\tau \\
    \le & \eps \| i[w](t)\|^2_{2,\Om} + C_\eps \int_0^t\left[
\|w(\tau)\|^2_{1,\Om}+  \|i[w](\tau)\|^2_{2,\Om}\right]d\tau
\end{align*}
fore every $\eps>0$, where the constant $C_\eps$ depends on $T$
and on bounds for solutions $\whw$ and $\wtw$.
\par
Using a similar argument in other terms of $q_2$ we obtain the estimate
\begin{equation}\label{i2-est}
     |I_2(t)|\le \eps\xi(t)+ C_\eps \int_0^t\xi(s)ds,~~ t\in [0,T],~~\forall\eps>0.
\end{equation}
To estimate $I_3(t)$ we note that
\begin{align*}
I_3(t)
\sim  \int_0^t d\tau
    \left(D_lw^*(\tau)D_kw(\tau),
    D_m i[u^j](t) -  D_m i[u^j](\tau) \right)_\Om,
\end{align*}
where $w^*$ is either $\widehat{w}$ or $\wtw$ as above.
To estimate $I_3(t)$, we use the following Br\'esis--Gallouet  type   inequality
\begin{equation}\label{BG-ineq}
\| f  g\|_\Om\le
c_1 \left\{\log (1+\si)\right\}^{1/2}\| f\|_{\Om} \|g\|_{1,\Om}+
 \frac{c_2}{1+\si}\| f\|_{1,\Om} \|g\|_{1,\Om}
\end{equation}
for every $f,g\in H^1(\Om)$ and for all $\si\ge 0$
(this inequality can be proved the same way as Lemma~A.3.6 \cite{cl-book},
see also the Appendix in \cite{ChuShch2011}).
\par
Inequality \eqref{BG-ineq} and a priori bound \eqref{a-pri-ro} imply
\begin{align*}
\|D_lw^*D_kw\|_\Om \le &
c_1 \left\{\log (1+\si)\right\}^{1/2}\| w^*\|_{2,\Om} \|w\|_{1,\Om}+
 \frac{c_2}{1+\si}\| w^*\|_{2,\Om} \|w\|_{2,\Om}  \\
 & \le
C_1(T) \left\{\log (1+\si)\right\}^{1/2} \|w\|_{1,\Om}+
 \frac{C_2(T)}{1+\si}, ~~~\forall \si>0.
\end{align*}
Thus,
\begin{align}\label{i3-est}
|I_3(t)|\lesssim &
  \int_0^t d\tau
    \|D_lw^*(\tau)D_kw(\tau)\|_\Om
    \|D_m i[u^j](t) -  D_m i[u^j](\tau)\|_\Om \notag \\
\le &   \frac{C(T)}{1+\si} + \eps  \|i[u^j](t)\|^2_{1,\Om} \notag \\
 & +  C(\eps, T) \log (1+\si)
\int_0^t d\tau
    \left[\|w(\tau)\|^2_{1,\Om}+
    \|i[u^j](\tau)\|^2_{1,\Om}\right]   \notag \\
    \le &   \frac{C(T)}{1+\si} + \eps  \xi(t)+  C(\eps, T) \log (1+\si)
\int_0^t \xi(\tau) d\tau.
\end{align}
Now we consider $I_4(t)$. First we write it in the form
\[
I_4(t)=I_4^a(t)+ I_4^b(t),
 \]
 where
\begin{align*}
I_4^a(t)
\sim & \int_0^t d\tau
    \left(D_k u^{\ast j}(\tau)D_lw(\tau),
    D_m i[w](t) -  D_m i[w](\tau) \right)_\Om, \\
  I_4^b(t)
\sim & \int_0^t d\tau
    \left(D_k u^{j}(\tau)D_lw^*(\tau),
    D_m i[w](t) -  D_m i[w](\tau) \right)_\Om,
\end{align*}
(the star $*$ in these formulas have the same meaning as above).
Using \eqref{BG-ineq} and \eqref{a-pri-ro} we have
\begin{align}\label{i4a-est}
  |I_4^a(t)| \lesssim
 & C_T \int_0^t d\tau
     \|D_lw(\tau)\left(
    D_m i[w](t) -  D_m i[w](\tau) \right)\|_\Om \notag \\
\le &
C_T \left[\log (1+\si)\right]^{1/2}\!\int_0^t\! d\tau
     \|w(\tau)\|_{1,\Om} \| i[w](t) -   i[w](\tau)\|_{2,\Om}
     + \frac{C(T)}{1+\si}
   \notag   \\
 \le &   \frac{C(T)}{1+\si} + \eps  \xi(t)+  C(\eps, T) \log (1+\si)
\int_0^t \xi(\tau) d\tau.
\end{align}
Using integration by parts we rewrite   term $I_4^b(t)$
in the form
\begin{align*}
  I_4^b(t)
\sim &  \int_0^t d\tau
    ( u^{j}(\tau),  D_k\left[ D_lw^*(\tau)\left(
    D_m i[w](t) -  D_m i[w](\tau) \right)\right])_\Om
\end{align*}
and thus,  since $H^{1/2}(\Om)\subset L_4(\Om)$, we have
\begin{align*}
  |I_4^b(t)|\lesssim
 &  \int_0^t d\tau
    \| u^{j}(\tau)\|_{1/2, \Om} \\
    & \times
     \|D_k\left[D_lw^*(\tau)\left(
    D_m i[w](t) -  D_m i[w](\tau) \right)\right]\|_{L_{4/3}(\Om)}.
\end{align*}
One can see that
\[
\|D_k(fg)\|_{L_{4/3}(\Om)}\le C \| f\|_{1, \Om}\| g\|_{1, \Om},~~~
f,g\in H^1(\Om).
\]
Therefore
\begin{align*}
  |I_4^b(t)|\lesssim
 &  C_T \int_0^t d\tau
    \| u^{j}(\tau)\|_{1/2, \Om}   \|  i[w](t) -   i[w](\tau) \|_{2,\Om} \\
 \le & \eps\left[  \|  i[w](t)\|^2_{2,\Om} + \int_0^t d\tau
    \| u^{j}(\tau)\|^2_{1/2, \Om} \right]  \\
    & + C_{T,\eps}\left[
    \left(\int_0^t d\tau
    \| u^{j}(\tau)\|_{1/2, \Om}\right)^2 +
    \int_0^t d\tau \| i[w](\tau) \|^2_{2,\Om}\right].
\end{align*}
The trace theorem implies
\[
\int_0^t d\tau
    \| u^{j}(\tau)\|^2_{1/2, \Om}\le C \int_0^t ds E\left(i[v](s),
i[v](s))\right),
\]
and thus
\begin{align}\label{i4b-est}
  |I_4^b(t)|\le \left[ \eps + C_{T,\eps} t \right] \xi(t)
     + C_{T,\eps}    \int_0^t d\tau \xi(\tau),~~~ t\in [0,T]
\end{align}
The  estimates in \eqref{i1-est}, \eqref{i2-est}, \eqref{i3-est},
\eqref{i4a-est}, \eqref{i4b-est} and \eqref{app_dif_est} allow us
to choose $\eps$ and $T_*>0$ such that
\begin{equation*}
    \xi(t)\le  \frac{C_1}{1+\si} +  C_2 \log (1+\si)
\int_0^t \xi(\tau) d\tau~~\mbox{for all}~~t\in [0,T_*]
\end{equation*}
with arbitrary $\si>1$. Now as in \cite{Sed91b,Sed_1997}
(see also \cite[Appendix A]{cl-book})
applying Gronwall's lemma
 we can conclude that $\xi(t)\equiv 0$
for all $t\in [0,T_{**}]$ for some $T_{**}\le T_*$.

\begin{remark}\label{re:al-not}
 {\rm
 In the case $\al=0$ we can prove
 the existence of weak solutions which satisfies the energy inequality, using the same type of argument.
 The uniqueness of these solutions is still an open question.
 Sedenko's method does not work here because the nonlinearity is
 strongly supercritical when $\al=0$.
 }
\end{remark}

{\it Step 5. Continuity with respect to $t$ and the energy equality.}
First we note that the vector $(v(t);u(t);\vr u^1_t(t);\vr u^2_t(t); w_t(t))$ is weakly continuous in $\cH$  for any weak solution  $(v(t);u(t))$.
Indeed, it follows from (\ref{weak_sol_d2}) that
 $(v(t);u(t))$  satisfies the relation
       \[
           (v(t),\psi)_\cO
 = (v_0, \psi(0))_{\cO}
+\int_0^t\left[-
 \nu E(v,\psi)
          +  (G_f(\tau), \psi)_{\cO}  \right] d\tau
        \]
for almost all $t\in [0,T]$ and for all
$\psi\in V_0=\{ v\in V :\, v|_\Om=0\}\subset\widetilde{V}\subset V$,
where $\widetilde{V}$ is given by \eqref{space-W}.
This implies that $v(t)$ is weakly continuous in $V_0'$.
Since $X\subset V'_0$, we can apply  Lions lemma
(see \cite[Lemma 8.1]{LiMa_1968}) and conclude that $v(t)$ is
weakly continuous in $X$. The same lemma gives us weak continuity
 of $u(t)$ in $W$. Now using  (\ref{weak_sol_d2}) again with $\psi\in \widetilde{V}$
 we conclude that
 \begin{equation*}
 t\mapsto  (M_\al w_t(t),\delta)_\Om+\vr (\bu_t(t),\bar{\beta})_\Om
 ~~\mbox{is continuous}
 \end{equation*}
 for every $\beta=(\bar\beta; \delta)\in H^1_0(\Om) \times H^1_0(\Om) \times \wH^2_0(\Om)$. This imply that
 \[
 t\mapsto   (\vr u^1_t(t); \vr u^2_t(t);w_t(t))~~\mbox{is weakly continuous}
 \] in
$Y= L_2(\Om) \times L_2(\Om) \times \wH^1_0(\Om)$
 for every $\vr\ge 0$.
\par
In the proof the energy equality we follow the scheme
of~\cite{KochLa_2002}. To this end we need to introduce finite difference operator $D_h$, depending on a small parameter $h$.

Let $g$ be a bounded function on $[0,T]$ with values in some Hilbert space. We extend $g(t)$ for all $t\in \R$ by defining $g(t)=g(0)$ for $t<0$ and $g(t)=g(T)$ for $t>T$. With this extension we denote
\begin{gather*}
g^+_h(t)=g(t+h)-g(t), \qquad g^-_h(t)=g(t)-g(t-h), \\
D_h g(t)=\frac 1{2h}(g^+_h(t)+ g^-_h(t)).
\end{gather*}
Properties of the operator $D_h$ are collected in Proposition 4.3 \cite{KochLa_2002}.
\par
Using weak continuity of weak solutions,
we can extend the variational relation in \eqref{weak_sol_def} on the class of test functions
from $\cL_T$ (instead of $\cL_T^0$) by an appropriate limit transition. More precisely, one can show that
 any weak solution $(v;u)$ (with $u=(u^1;u^2;w)\equiv (\bu;w)$)  satisfies the relation
   \begin{multline}
            -\!\int_0^T\!\!(v,\phi_t)_\cO dt +\nu\!\int_0^T\!\!E(v,\phi)_{\cO}  dt -\!\int_0^T\!\! \big[(M_\al w_t,d_t)_\Om+\vr (\bu_t,\bar{b}_t)_\Om\big] dt\\ +
            \ga\int_0^T (M_\al w_t, d)_{\Om}dt+ \!\int_0^T\!\!(\De w, \De d)_\Om dt +  \!\int_0^T\!\!a(\bu, \bar{b})_\Om dt  + \!\int_0^T\!\! q(u,b)_\Om dt \\
          =  (v_0, \phi(0))_{\cO}
          +(M_\al w_1,d(0))_\Om+\vr (\bu_1,\bar{b}(0))_\Om \\
          -\left[(v(T), \phi(T))_{\cO} +(M_\al w_t(T),d(T))_\Om+\vr (\bu_t(T),\bar{b}(T))_\Om\right]
          \\
          +\int_0^T(G_f(t), \phi)_{\cO} dt +\int_0^T(G_{sh}(t),b)_\Om dt
           \label{weak_sol_def_mod}
        \end{multline}
        for every  $\phi\in\cL_T$ with $\phi\big|_\Om=b=(b^1;b^2;d)\equiv(\bar{b};d)$.

Now  we use
\begin{equation*}
\phi=\frac 1{2h}\int_{t-h}^{t+h} v(\tau) d\tau
\end{equation*}
as a test function in \eqref{weak_sol_def_mod}. For the shell component we have test function $b=\phi|_\Om=D_h u$ -- the same one that used in \cite{KochLa_2002} for the full Karman model. Thus, all the arguments for the shell component in our model are the same as in \cite{KochLa_2002}, and we need to treat the fluid component only. Using Proposition 4.3 \cite{KochLa_2002}, one can conclude, that
\begin{multline*}
\int_0^T dt \left[ (v(t), D_h v(t))_\cO +\nu E\left(v(t), \frac 1{2h}\int_{t-h}^{t+h} v(\tau) d\tau\right) \right] \rightarrow \\
\frac 12 \left[v(T)-v(0) \right] +\nu\int_0^T dt E(v(t), v(t))
\end{multline*}
when $h\rightarrow 0$.  This makes it possible to prove the energy equality in \eqref{lin_energy}.
\par
Continuity of weak solutions with respect to $t$
stated in \eqref{cont-ws} and \eqref{cont-ws-0}
can be obtained in the standard
way from the energy equality and weak continuity (see \cite[Ch.~3]{LiMa_1968}
and also \cite{KochLa_2002}).

\par
{\it Step 6. Continuity with respect to initial data.}
First we prove the continuity \wrt weak topology.
\par
 Let $\vr>0$ and $\{U^n_0\}$ be the sequence of initial data such that $U^n_0 \rightharpoonup U_0$
  weakly in  $\cH$ (defined by \eqref{space-cH}) as $n\arr\infty$. We need to prove that
\begin{equation*}
U^n(t) \rightharpoonup U(t) \quad \mbox{weakly in}~~\cH~~\mbox{for every}~~t>0,
\end{equation*}
where $U^n(t)= (v^n(t); u^n(t); u_t^n(t))$ and $U(t)= (v(t); u(t); u_t(t))$ are weak solutions to the problem in question with the corresponding initial data.
Using  energy relation \eqref{lin_energy} and
 Proposition~\ref{pr:Korn}, we conclude that
\begin{equation*}
\sup_{[0,T]}||U^n(t)||^2_\cH \le C_T,\quad n=1,2,\ldots
\end{equation*}
This enables us to extract a subsequence such that
\begin{gather}
U^n(t) \rightharpoonup U^\ast(t) \quad \ast\mbox{-weakly in}\; L_\infty(0,T;\cH); \label{conv_sol}\\
v^n(t) \rightharpoonup v^\ast(t)\quad \mbox{ weakly in } L_2(0,T; V); \label{conv_v} \\
w^n_t(t) \rightharpoonup w^\ast_t(t) \quad \mbox{ weakly in } L_2(0,T; H_\alpha);\label{conv_w}
\end{gather}
for some $U^\ast(t)= (v^\ast(t); u^\ast(t); u^\ast_t(t))$.
Performing limit transition in \eqref{weak_sol_def}
and using weak convergence of initial data,
we obtain that $(v^\ast;u^\ast)$ is a weak solution
 and thus $U^\ast(t)=U(t)$ by the uniqueness statement.
\par
To proceed with the proof, we need to establish additional estimates for time derivatives.
\par
Taking $\psi \in \cL_T^0$ such that $\psi(0)=0$ and $\psi|_\Om=0$ as a test function in \eqref{weak_sol_def}, we obtain
\begin{equation*}
\int_0^T (v^n, \psi_t)_\cO d\tau=\int_0^T\left[ E(v^n,\psi) - (G_f, \psi)_\cO \right]d\tau,
\end{equation*}
which allows to define $v^n_t$ as a functional on $V_0=\{\phi\in V: \; \phi|_\Om=0\}$ and get the estimate
\begin{equation*}
||v^n_t||_{L_2(0,T;(V_0)')} \le C(T). 
\end{equation*}
Using $N_0b$ with $b=(b^1;b^2;d)\in C^1_0(0,T; H_0^1(\Om)\times
H_0^1(\Om)\times\times \Hto)$ as a test function, we obtain the estimate
\begin{align*}
 ||(1-\al\De)w_{tt}||_{L_2(0,T; \cF_{-2}(\Om))} +
 \vr ||\bu_{tt}||_{L_2(0,T;H^{-1}(\Om))} \le C(T),
\end{align*}
where $\cF_{-s}(\Om)$ with $s\in[1,2]$ is adjoint to $H_0^s(\Om)$
with respect to duality generated by $H_\al(\Om)$.
Thus we have the following additional convergence properties:
\begin{align*}
&v^{n}_t \rightharpoonup v_t  \qquad \mbox{weakly in } L_2(0,T; [H^{-1}(\cO)]^3), \\
&\vr \bu^{n}_{tt} \rightharpoonup \vr\bu_{tt}  \qquad \mbox{weakly in } L_2(0,T;[H^{-1}(\Om)]^2), \\
&w^{n}_{tt} \rightharpoonup w_{tt}
\qquad \mbox{weakly in } L_2(0,T;\cF_{-2}(\Om)),
\end{align*}
We note that $H_\al(\Om)\subset H_0^\sigma(\Om)\subset L_2(\Om)\subset \cF_{-2}(\Om)$
for every $\sigma\in [0,1]$. Therefore
from the relations above and also from \eqref{conv_sol}--\eqref{conv_w}
using the Aubin-Dubinsky theorem  (see \cite{sim})
 we conclude that
\begin{align*}
&v^{n} \arr v \qquad  \mbox{strongly in } C(0,T; H^{-\eps}), \\
&u^{n} \arr u  \qquad \mbox{strongly in } C(0,T;  H^{1-\eps}_0(\Om)\times H^{1-\eps}_0(\Om)\times H^{2-\eps}_0(\Om)),\\
&w^{n}_{t} \arr w_{t}  \qquad \mbox{strongly in } C(0,T;H_0^{1-\eps}(\Om)), \\
&\vr \bu^{n}_{t} \arr \vr\bu_{t}  \qquad \mbox{strongly in } C(0,T;[H^{-\eps}(\Om)]^2).
\end{align*}
In particular, this implies that $U^n(t) \arr U^\ast(t)$
weakly in $\cH$ when $U^n_0 \arr U_0$.
\par
To prove strong continuity with respect to initial data,
we use an idea borrowed from \cite{KochLa_2002}.
Due to weak continuity already established we need only to show
the  convergence of $\|U^n(t)\|_\cH$ to $\|U(t)\|_\cH$ for each $t>0$
under the condition $\| U_0^n-U_0\|_\cH\to 0$ as $n\to\infty$.
\par
Let $Q_0(u)$ be given by the first line of \eqref{Q-def}
with
\[
\eps_{11}=\eps_{11}^0\equiv u^1_{x_1},~~ \eps_{22}=\eps_{22}^0\equiv u^2_{x_2},~~
\eps_{12}=\eps_{12}^0\equiv u^1_{x_2}+u^2_{x_1}
\]
Due to the argument given in the proof of Proposition~\eqref{pr:Korn}
$\sqrt{Q_0(u)}$ is an equivalent norm on $H_0^1(\Om)\times H_0^1(\Om)$.
Thus, to establish convergence of $U^n(t)$ to $U(t)$
in $\cH$ it is sufficient to show that
\[
\cE_0(U^n(t)) \arr \cE_0(U(t))~~\mbox{as $n\to\infty$ ~~for every}~~ t>0,
\]
where
\begin{equation}\label{en-def-0}
\cE_0(v, u, u_t)=\frac12\left[\|v\|^2_\cO+ \|M_\al^{1/2}w_t\|^2_\Om+
\vr \|\bu_t\|^2_\Om+\| \De w\|_\Om^2 +Q_0(u)\right].
\end{equation}
To prove this we use the energy relation in \eqref{lin_energy}.
First we note that the potential energy term $Q(u)$
in the energy $\cE$ given in \eqref{en-def} has the form
\begin{equation}\label{q-q0}
    Q(u)=Q_0(u)+ {\rm Comp}\, (u),
\end{equation}
where ${\rm Comp}\, (u)$ is a functional  which is continuous \wrt weak
convergence in $\cH$.
Since $\cE(U^n_0) \arr \cE (U_0)$, it follows from energy equality
\eqref{lin_energy} that
\[
\lim_{n\arr\infty}\left[ \cE_0(U^n(t)) +  \Phi_t(U^n) \right] =
\cE_0(U(t)) + \nu\int_0^t E(v, v) d\tau +\Phi_t(U),
\]
where
\[
\Phi_t(U)= \nu\int_0^t E(v, v) d\tau + \ga \int_0^t (M_\alpha w_\tau, w_\tau) d\tau
\]
Using lower semicontinuity of the functional
$\Phi_t(U)$
with respect to
 weak convergence $U^n(t) \rightharpoonup U(t)$  in  $L_2(0,T;\cH)$, we obtain that
\begin{equation*}
\liminf_{n\arr\infty} \cE_0(U^n(t)) \le \cE_0(U(t)),
\end{equation*}
which together with lower semicontinuity of the (quadratic) energy $\cE_0$
 gives us the desired result. This completes the proof of Theorem~\ref{th:WP}.

\section{Stationary solutions}\label{sec:stat}
In this section following ideas presented in \cite{ChuRyz2011}
and \cite{Vorvich-stat} we
describe properties of stationary solutions in the case when
the forces $G_f$ and  $G_{sh}$ are autonomous.
These solutions are the same in both cases $\vr>0$ and $\vr=0$.
\par
It follows from Definition~\ref{de:solution} that
a stationary  solution $(v;u)\in V\times W$  satisfies the relation
       \begin{multline}
 \nu E(v,\psi) +(\De w, \De \delta)_\Om + a(\bu, \bar{\beta}) \\
   + q(u, \beta)  -  (G_f, \psi)_{\cO}  -(G_{sh},\beta)_\Om    =0
\label{stat-sol}
        \end{multline}
for  all
$\psi\in \widetilde{V}$  with $\psi\big\vert_\Om=\beta=(\beta^1;\beta^2;\delta)$
and $\bar{\beta}=(\beta^1;\beta^2)$. The space $\widetilde{V}$ is
given by \eqref{space-W}, i.e.,
\begin{equation*}
\widetilde{V}=\left\{
\psi\in  V \left|  \;
  \psi|_\Om=
\beta\equiv(\beta^1;\beta^2;\delta)\in  \widehat{W} \right. \right\},
\end{equation*}
where
\begin{equation}\label{W-hat-space}
    \widehat{W}= H^1_0(\Om) \times
H^1_0(\Om) \times \Hto.
\end{equation}
Moreover, by the comparability condition we have that $v|_{\pd\cO}=0$.
\par
We start with the following description of stationary solutions.
\par
\begin{proposition}\label{pr:stat-d2}
A couple $(v;u)\in V\times W$ is a stationary solution if and only if
 \begin{itemize}
    \item $v$ lies in  $V_0=\{\psi\in  V :   \psi|_{\pd\cO}=0\}$ and satisfies
the relation
\begin{equation}\label{v-stat}
 \nu E(v,\psi) -  (G_f, \psi)_{\cO} =0,\quad \forall\, \psi\in V_0;
\end{equation}
\item $u=(u^1;u^2;w)\in W=H^1_0(\Om) \times
H^1_0(\Om) \times H^2_0(\Om)$ satisfies the equality
       \begin{equation}
 (\De w, \De \delta)_\Om
   + \bar{q}(u, \beta)    -(G_{sh}+N_0^*G_f,\beta)_\Om    =0 \label{stat-sol_2}
        \end{equation}
for  all $\beta=(\beta^1;\beta^2;\delta)\in \whW$, where  $\widehat{W}$
is given by \eqref{W-hat-space}
and
\begin{align}\label{q-u-beta}
   \bar{q}(u,\beta)=&
   ( k_1 N_{11}+k_2 N_{22}, \delta )_\Om
    \\ & +
    (N_{11}w_{x_1} + N_{12} w_{x_2}, \delta_{x_1})_\Om +(N_{12}w_{x_1}
+ N_{22} w_{x_2}, \delta_{x_2})_\Om \notag\\
    &+  (N_{11}, \beta^1_{x_1})_\Om +(N_{12}, \beta^1_{x_2} + \beta^2_{x_1})_\Om
    +(N_{22}, \beta^2_{x_2})_\Om \notag
    \end{align}
  \end{itemize}
\end{proposition}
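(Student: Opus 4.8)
The plan is to prove Proposition~\ref{pr:stat-d2} by carefully choosing two complementary families of test functions in the defining relation \eqref{stat-sol} and exploiting the structure of the Stokes operator $N_0$ from Proposition~\ref{pr:stokes}. The key observation is that the space $\widetilde{V}$ of admissible test functions $\psi$ contains the subspace $V_0=\{\psi\in V:\psi|_{\pd\cO}=0\}$ (for which $\beta=0$), and that any $\psi\in\widetilde{V}$ can be written as $\psi=\psi_0+N_0\beta$ with $\psi_0\in V_0$ and $\beta=\psi|_\Om\in\widehat W$.

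First I would prove the ``only if'' direction. Taking $\psi\in V_0$ in \eqref{stat-sol} (so $\beta=0$, hence $\bar\beta=0$, $\delta=0$, and $q(u,0)=0$) immediately yields \eqref{v-stat}. This shows $v$ solves the homogeneous-boundary Stokes problem; by the comparability condition $v|_{\pd\cO}=0$, so $v\in V_0$. Next, for an arbitrary $\beta=(\beta^1;\beta^2;\delta)\in\widehat W$ I would insert $\psi=N_0\beta\in\widetilde V$ into \eqref{stat-sol}. Using \eqref{v-stat} (which is now an identity valid for all test functions in $V_0$, but here I need to handle the cross term $\nu E(v,N_0\beta)-(G_f,N_0\beta)_\cO$), I would integrate by parts: since $v$ is a genuine Stokes solution, $\nu E(v,N_0\beta)_\cO-(G_f,N_0\beta)_\cO$ reduces to a boundary term on $\Om$, which by definition is $-(T_f(v),\beta)_\Om$ or equivalently $(N_0^*G_f,\beta)_\Om$ with an appropriate sign; this is precisely where the operator $N_0^*$ enters and produces the load $N_0^*G_f$ on the shell. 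The remaining terms $(\De w,\De\delta)_\Om+a(\bu,\bar\beta)+q(u,\beta)$ must then be reorganized: the point is that $a(\bu,\bar\beta)$ together with the part of $q(u,\beta)$ that is linear in $\bu$ combine, via Hooke's law \eqref{Hooke} and the definitions of $\eps_{ij}$, into the terms $(N_{11},\beta^1_{x_1})_\Om+(N_{12},\beta^1_{x_2}+\beta^2_{x_1})_\Om+(N_{22},\beta^2_{x_2})_\Om$ appearing in $\bar q$, while the $\delta$-components of $q$ reproduce the first three lines of \eqref{q-u-beta} verbatim. This gives \eqref{stat-sol_2}.

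For the ``if'' direction I would run the same computation in reverse: given $v\in V_0$ satisfying \eqref{v-stat} and $u\in W$ satisfying \eqref{stat-sol_2}, and given an arbitrary $\psi\in\widetilde V$, decompose $\psi=\psi_0+N_0\beta$ with $\psi_0=\psi-N_0\beta\in V_0$ and $\beta=\psi|_\Om$, then add \eqref{v-stat} applied to $\psi_0$ to \eqref{stat-sol_2} applied to $\beta$, reassembling the boundary term $(N_0^*G_f,\beta)_\Om$ back into the fluid integral to recover \eqref{stat-sol}. The regularity needed to justify all integrations by parts (in particular to make sense of $N_0^*G_f$ as an element of $[H^{-1/2}(\Om)]^2\times H^{-1}(\Om)$ and of the traces of $v$ and $\nabla v$ on $\Om$) is supplied by Proposition~\ref{pr:stokes}, especially the mapping property $N_0:[H^s_*(\Om)]^2\times\widehat H^s_*(\Om)\to[H^{1/2+s}(\cO)]^3\cap X$.

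The main obstacle I anticipate is the bookkeeping in the integration-by-parts step that converts $\nu E(v,N_0\beta)_\cO-(G_f,N_0\beta)_\cO$ into the boundary functional on $\Om$ and identifies it with $(N_0^*G_f,\beta)_\Om$ plus the surface-force contribution $T_f(v)$ — one must be careful that $N_0\beta$ is only in $H^{1/2}$ near $\partial\cO$ in general, so the pairing has to be understood in the duality sense, and the compatibility condition $\int_\Om\delta\,dx'=0$ (built into $\widehat W$ and $\Hto$) is exactly what makes the pressure term well-defined modulo constants. A secondary technical point is verifying the algebraic identity matching $a(\bu,\bar\beta)+q(u,\beta)$ against $\bar q(u,\beta)$; this is a routine but slightly tedious expansion using \eqref{Hooke} and the definitions of $\eps_{ij}$ and of the form $a$ in \eqref{form-a}, and I would simply note that it follows by collecting terms rather than write it all out.
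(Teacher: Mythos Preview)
Your overall strategy coincides with the paper's: test \eqref{stat-sol} with $\psi\in V_0$ to isolate \eqref{v-stat}, then with $\psi=N_0\beta$ to obtain \eqref{stat-sol_2}, and run the decomposition $\psi=\psi_0+N_0\beta$ in reverse for the converse. The algebraic identity $a(\bu,\bar\beta)+q(u,\beta)=\bar q(u,\beta)$ (which needs one integration by parts of the type $(u^2_{x_2},\beta^1_{x_1})_\Om=(u^2_{x_1},\beta^1_{x_2})_\Om$ to close) is indeed routine.

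Where you diverge from the paper is in handling the fluid cross term, and here you make the argument harder than it is. The paper's single observation is that
\[
E(v,N_0\beta)=0\qquad\mbox{for every }\beta\in\whW,
\]
which is immediate: $N_0\beta$ is by definition the weak Stokes solution with zero forcing, so $\nu E(N_0\beta,\phi)=0$ for all $\phi\in V_0$; since $v\in V_0$ by the compatibility condition, symmetry of $E$ gives the claim. With this in hand, substituting $\psi=N_0\beta$ into \eqref{stat-sol} leaves only $-(G_f,N_0\beta)_\cO=-(N_0^*G_f,\beta)_\Om$ from the fluid side, and \eqref{stat-sol_2} follows at once. There is no need to pass through the surface traction $T_f(v)$, no integration by parts on the fluid domain, and no trace-regularity issue for $\nabla v$: what you identify as ``the main obstacle'' simply disappears. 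Your route through $T_f(v)$ is not wrong---it would ultimately recover the identity $T_f(v)=-N_0^*G_f$---but to complete it you would still need $E(v,N_0\beta)=0$, so you may as well invoke that fact from the start.
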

\begin{proof}
Taking $\beta\equiv 0$ in \eqref{stat-sol} yields \eqref{v-stat}.
Since $v\big|_{\pd\cO}=0$,
one can see that $E(v, N_0\beta)=0$ for every $\beta \in \whW$. Therefore  taking $\psi= N_0\beta$
in \eqref{stat-sol}  gives us \eqref{stat-sol_2}.
Thus any stationary solution satisfies \eqref{v-stat} and \eqref{stat-sol_2}.
Similarly, we can derive  \eqref{stat-sol} from \eqref{v-stat} and \eqref{stat-sol_2}.
\end{proof}
The following assertion shows that for the   forces
 $G_f$ and $G_{sh}$ of a special structure
we can guarantee the existence of stationary solutions.
\begin{proposition}\label{pr:stat}
We assume that
\begin{equation}\label{forces-for-stat}
G_f\equiv 0,~~ G^1_{sh}=G^2_{sh}\equiv 0~~
  and ~~G^3_{sh}\equiv g\in H^{-1}(\Om).
\end{equation}
Then any stationary solution has the
 form $(0;u)$, where $u=(u^1;u^2;w)\in W$  satisfies
      \begin{equation}\label{stat-sol_2a}
 (\De w, \De \delta)_\Om
   + \bar{q}(u, \beta)    -(g,\delta)_\Om    =0,
~~~\forall\, \beta=(\beta^1;\beta^2;\delta)\in \widehat{W}.
        \end{equation}
 Moreover, there exists at least one  solution $u=(u^1;u^2;w)$ to \eqref{stat-sol_2a} in the space $\widehat{W}$.
If in addition we assume that $k_1=k_2=0$, then
     the set of all stationary solutions  to \eqref{stat-sol_2a} from
     $\widehat{W}$ is bounded in the space $\widehat{W}$.
\end{proposition}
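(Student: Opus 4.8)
\emph{Plan.} The plan is to settle the three assertions in order: first the structural reduction showing that $v\equiv0$ and that $u$ solves \eqref{stat-sol_2a}; then existence by the direct method applied to a suitable potential energy functional; and finally the a~priori bound in the case $k_1=k_2=0$.

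\emph{Reduction.} Since $G_f\equiv 0$ by \eqref{forces-for-stat}, taking $\psi=v$ in \eqref{v-stat} gives $E(v,v)=0$; because $v|_{\pd\cO}=0$, Korn's inequality yields $v\equiv 0$. Then $N_0^\ast G_f=0$, and since $G_{sh}^1=G_{sh}^2\equiv 0$, $G_{sh}^3\equiv g$, relation \eqref{stat-sol_2} of Proposition~\ref{pr:stat-d2} becomes exactly \eqref{stat-sol_2a}. Hence $(v;u)$ is a stationary solution if and only if $v\equiv 0$ and $u=(u^1;u^2;w)\in\whW$ solves \eqref{stat-sol_2a}.

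\emph{Existence.} I would view \eqref{stat-sol_2a} as the Euler--Lagrange equation on $\whW$ of the functional
\[
\Pi(u)=\tfrac12\|\De w\|^2_\Om+\tfrac12\,Q(u)-(g,w)_\Om ,
\]
with $Q$ given by \eqref{Q-def}. A direct differentiation through the strains $\eps_{ij}$ shows that $\Pi$ is (Fréchet) differentiable on $\whW$ with $\langle\Pi'(u),\beta\rangle=(\De w,\De\delta)_\Om+\bar{q}(u,\beta)-(g,\delta)_\Om$ for $\beta=(\beta^1;\beta^2;\delta)\in\whW$ (one uses $\langle Q'(u),\beta\rangle=2\bar{q}(u,\beta)$), so critical points of $\Pi$ are precisely the solutions of \eqref{stat-sol_2a}. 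It remains to show that $\Pi$ attains its infimum. Since $Q\ge 0$ and $\|\De\cdot\|_\Om$ is an equivalent norm on $H^2_0(\Om)$, completing the square in the linear term gives $\Pi(u)\ge \tfrac14\|\De w\|^2_\Om+\tfrac12\,Q(u)-C\|g\|^2_{-1,\Om}$, so $\Pi$ is bounded below; moreover Proposition~\ref{pr:Korn} bounds $\|u^1\|^2_{1,\Om}+\|u^2\|^2_{1,\Om}$ by $C\big[Q(u)+\|\De w\|^2_\Om+\|\De w\|^4_\Om\big]$, so each sublevel set $\{\Pi\le M\}$ is bounded in $\whW$. For weak lower semicontinuity, if $u_n\rightharpoonup u$ in $\whW$ then the compact embeddings $H^2_0(\Om)\hookrightarrow H^{2-\e}_0(\Om)$ and $H^1(\Om)\hookrightarrow L_4(\Om)$ give $w_n\to w$ in $H^1$ and $\g w_n\to\g w$ in $[L_4(\Om)]^2$, hence each strain $\eps_{ij}(u_n)\rightharpoonup\eps_{ij}(u)$ weakly in $L_2(\Om)$ (the $\g u^i$-part weakly, the $w$-dependent part strongly); since $Q$ is a nonnegative quadratic form of the strain triple, it is convex and continuous on $[L_2(\Om)]^3$, hence weakly l.s.c., while $\|\De w\|^2_\Om$ is convex continuous and $(g,w)_\Om$ is weakly continuous. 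The direct method then produces a minimizer $u^\ast\in\whW$, which is a critical point of $\Pi$, i.e.\ a solution of \eqref{stat-sol_2a}.

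\emph{Boundedness for $k_1=k_2=0$.} Let $u=(u^1;u^2;w)\in\whW$ solve \eqref{stat-sol_2a}; both $\beta=u$ and $\beta=(u^1;u^2;0)$ are admissible test functions. When $k_1=k_2=0$ the strains contain no term linear in $w$, and inserting these two choices into \eqref{q-u-beta} and regrouping via the products $N_{ij}\eps_{ij}$ yields
\[
\bar{q}(u,u)=Q(u)+\tfrac12\!\int_\Om \g w^{\,T}\sN(u)\,\g w\,dx',\qquad
\bar{q}\big(u,(u^1;u^2;0)\big)=Q(u)-\tfrac12\!\int_\Om \g w^{\,T}\sN(u)\,\g w\,dx'.
\]
The choice $\beta=(u^1;u^2;0)$ in \eqref{stat-sol_2a} forces the second quantity to vanish, so $\tfrac12\int_\Om\g w^{\,T}\sN(u)\g w\,dx'=Q(u)$; the choice $\beta=u$ gives $\|\De w\|^2_\Om+\bar{q}(u,u)=(g,w)_\Om$; combining,
\[
\|\De w\|^2_\Om+2\,Q(u)=(g,w)_\Om\le C\|g\|_{-1,\Om}\,\|\De w\|_\Om .
\]
Hence $\|\De w\|_\Om\le C\|g\|_{-1,\Om}$ and $Q(u)\le C\|g\|^2_{-1,\Om}$, and Proposition~\ref{pr:Korn} then bounds $\|u^1\|_{1,\Om}+\|u^2\|_{1,\Om}$; therefore the set of all solutions of \eqref{stat-sol_2a} in $\whW$ is bounded.

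\emph{Main obstacle.} The routine work lies in the reduction and in the coercivity/semicontinuity bookkeeping of the existence step. I expect the real difficulty to be the algebraic identity in the last step: one must track precisely how $Q$, the stresses $N_{ij}$, and the strains $\eps_{ij}$ interlock when $\beta=u$ is substituted, and it is exactly the vanishing of $k_1,k_2$ that cancels the term $(k_1N_{11}+k_2N_{22},w)_\Om$ — which carries an uncontrolled factor $\|\g\bu\|_\Om$ — so that the estimate closes; for nonzero curvature this obstruction is genuine, which is why the boundedness claim is restricted to $k_1=k_2=0$.
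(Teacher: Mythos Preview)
Your proposal is correct and follows essentially the same route as the paper: the reduction to $v\equiv0$ via \eqref{v-stat}, existence by minimizing the potential $\Pi(u)=\tfrac12\|\De w\|^2_\Om+\tfrac12 Q(u)-(g,w)_\Om$ on $\whW$, and boundedness from a test-function identity.

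The only noteworthy difference is in the boundedness step. The paper uses a single test function $\beta=(u^1;u^2;\tfrac12 w)$ and, keeping general $k_i$, computes directly $\bar{q}(u,\beta)=-\tfrac12(k_1N_{11}+k_2N_{22},w)_\Om+Q(u)$, so that \eqref{stat-sol_2a} yields $\tfrac12\|\De w\|^2_\Om+Q(u)-\tfrac12(k_1N_{11}+k_2N_{22},w)_\Om=\tfrac12(g,w)_\Om$; setting $k_i=0$ then closes the estimate (and the paper notes smallness of $k_i$ would also suffice). Your version uses the pair $\beta=u$ and $\beta=(u^1;u^2;0)$ and combines; since $(u^1;u^2;\tfrac12 w)=\tfrac12[u+(u^1;u^2;0)]$ and $\bar{q}$ is linear in its second argument, your two identities average precisely to the paper's single identity, and the resulting relation $\|\De w\|^2_\Om+2Q(u)=(g,w)_\Om$ is the same. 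Your two-step route makes the cancellation of the quartic term $\tfrac12\int_\Om\nabla w^T\sN(u)\nabla w$ explicit, while the paper's choice hides that cancellation inside the single computation; neither buys anything substantive over the other.
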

\begin{proof}
If $G_f\equiv 0$, then it follows from \eqref{v-stat} that $v\equiv0$. Thus
equation \eqref{stat-sol_2} turns into \eqref{stat-sol_2a}.
\par
To prove the existence of elements $u=(u^1;u^2;w)\in \whW$ satisfying \eqref{stat-sol_2a},
we note (see, e.~g.~\cite{Vorvich-stat}) that a solution can  be obtained as a minimum
point of the functional
\[
\Pi(u)=\hf\left[\|\Delta w\|^2+Q(u)\right]- (w,g)_\Om~~\mbox{on}~~\whW,
\]
where $Q$ is given by \eqref{Q-def}.
It is clear that $\Pi(u)$ is bounded from below.\footnote{This is exactly the point where
 we use the structure of the external forces assumed in \eqref{forces-for-stat}
 }
 Thus we can construct appropriate Galerkin approximations $\{u_n\}\subset\whW$
 for  the global minimum  and note that $\Pi(u_n)\le \Pi(0)$.
This facts together with Proposition~\ref{pr:Korn} provide us an a priori estimate which allows
to prove the existence of a solution  by the same method as in  \cite{Vorvich-stat}.
\par
To prove the boundedness of stationary solutions we
consider the functional  $\bar{q}(u,\beta)$ given by \eqref{q-u-beta}
with $\beta=(\beta^1;\beta^2;\delta)$, where
with $\delta=\hf w$ and $\beta^i=u^i$.
In this case we obtain that
\begin{align*}
   \bar{q}(u,\beta)=&
    -\hf(k_1 N_{11}+k_2 N_{22},w)_\Om
   +
    (N_{11}, u^1_{x_1}+ k_1w+ \hf[w_{x_1}]^2)_\Om \\ &+
    (N_{12},   u^1_{x_2} + u^2_{x_1}+ w_{x_1}w_{x_2})_\Om
    +  (N_{22}, u^2_{x_2} +  k_2w+ \hf[w_{x_2}]^2)_\Om.
    \end{align*}
Using the expressions for the deformation tensor $\{\eps_{ij}\}$ we obtain
\begin{align*}
   \bar{q}(u,\beta)=&
    -\hf(k_1 N_{11}+k_2 N_{22},w)_\Om
    \\ & +
    (N_{11}, \eps_{11})_\Om +
    (N_{12},  \eps_{12})_\Om +  (N_{22}, \eps_{22})_\Om.
    \end{align*}
The Hooke law \eqref{Hooke} yields
\begin{align*}
   \bar{q}(u,\beta)=&
    -\hf(k_1 N_{11}+k_2 N_{22},w)_\Om
    \\ & +
    \frac 2{(1-\mu)}\int_\Om \left( \eps_{11}^2 + \eps_{22}^2 +2\mu
\eps_{11}\eps_{22} + \hf (1-\mu)\eps_{12}^2 \right) \\
    =&   -\hf(k_1 N_{11}+k_2 N_{22},w)_\Om+ Q(u),
    \end{align*}
where $Q(u)$ is given by \eqref{Q-def}. Therefore under the conditions in \eqref{forces-for-stat} from
 \eqref{stat-sol_2a}  we have that
       \begin{equation*}
 \hf \|\De w\|^2_\Om + Q(u) - \hf(k_1 N_{11}+k_2 N_{22},w)_\Om
     = \hf (g,w)_\Om
        \end{equation*}
Thus,
the set of stationary solutions is bounded provided $k_i=0$
(or even small enough).
\end{proof}
To the best of our knowledge the existence of stationary
solutions in the case of general external loads is
still an open question.
\section{Existence of a global attractor}\label{sec:asymp}
In this section we prove the existence of a compact global attractor
under the condition that the external forces
satisfy \eqref{forces-for-stat}.
\par
First we note that by
   Theorem~\ref{th:WP}  problem \eqref{fl.1}--\eqref{Com-con} generates an evolution semigroup   $S_t$ in the space $\hch$, which has the form
\begin{itemize}
  \item $\hch=X\times \whW\times Y$  in the case $\vr>0$,
  \item  $\hch= X\times \whW\times H_\alpha$  in the case $\vr=0$,
\end{itemize}
where $X$ and $Y$ are defined in \eqref{X-space} and \eqref{WY-space}
and $\whW$ is given by \eqref{W-hat-space}. The
evolution operator $S_t$ is defined as follows
\begin{itemize}
  \item {\bf Case $\vr>0$:} $S_t(v_0;u_0;u_1)\equiv U(t)=(v(t);u(t);u_t(t))$,
  where the couple $(v(t);u(t))$ solves \eqref{fl.1}--\eqref{Com-con}.
  \item  {\bf Case $\vr=0$:} $S_t(v_0;u_0;w_1)\equiv \bar{U}(t)=(v(t);u(t);w_t(t))$,
  where  $v(t)$ and $u(t)=(u^1(t);u^2(t);w(t)) $ solves \eqref{fl.1}--\eqref{Com-con} with $\vr=0$.
\end{itemize}
Our main result in this section is the following theorem.

\begin{theorem}\label{th:attractor}
  Assume that $\ga>0$ and the external forces satisfy \eqref{forces-for-stat}.
   Let the set of the stationary points in $\hch$
   of the problem  \eqref{fl.1}--\eqref{Com-con} is bounded. Then the evolution semigroup $S_t$ generated by this problem possesses a compact global attractor.
\end{theorem}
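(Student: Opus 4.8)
The plan is to invoke J.~Ball's method for establishing the existence of a compact global attractor for a gradient system (as cited via \cite{ball,MRW}), which requires three ingredients: (i) the semigroup $S_t$ is asymptotically smooth, (ii) the system possesses a strict Lyapunov function whose bounded sets of critical points are bounded, and (iii) bounded full trajectories are precompact. Here the $\omega$-limit compactness is obtained not via a splitting into exponentially decaying and compact parts but via the energy-equality/weak-continuity argument that is characteristic of Ball's approach.

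\smallskip\par
\textbf{Step 1: Lyapunov function and dissipativity.}
Under the assumptions \eqref{forces-for-stat} the energy balance \eqref{lin_energy} reduces to
\[
\cE(U(t))+\nu\int_s^t E(v,v)\,d\tau+\ga\int_s^t\|w_\tau\|^2_{H_\alpha}\,d\tau
=\cE(U(s)),
\]
so $\cE$ is non-increasing along trajectories. Hence $\Phi(U):=\cE(U)$ is a Lyapunov function; it is constant on a trajectory only if $v\equiv 0$ in $\cO$ (from the $\nu E(v,v)$ term and the Korn-type bound, together with the boundary condition $v|_\Om=u_t|_\Om$) and $w_t\equiv 0$, i.e.\ only on stationary points. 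The set $\cN$ of stationary points is assumed bounded in $\hch$. Since $\cE(U)\ge c_0\|U\|^2_{\hch}-c_1$ for some $c_0>0$ (again by Proposition~\ref{pr:Korn} and the fact that $Q(u)+\text{lower order} \gtrsim \|u\|_W^2$), $\cE$ is bounded on bounded sets and coercive, so sublevel sets of $\cE$ are bounded and forward invariant; this yields a bounded absorbing set and in particular all trajectories starting from a bounded set are bounded in $\hch$ uniformly in $t\ge 0$.

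\smallskip\par
\textbf{Step 2: asymptotic smoothness via energy equality.}
The heart of the proof is to show that $S_t$ is asymptotically smooth. Following Ball, I would show: if $\{U^n_0\}$ is bounded in $\hch$ and $t_n\to\infty$, then $\{S_{t_n}U^n_0\}$ is precompact. Passing to the backward-extended trajectories $U^n(t)=S_{t+t_n}U^n_0$ defined for $t\ge -t_n$, the uniform bound from Step~1 and the a~priori estimates for time derivatives (exactly as in Step~6 of the proof of Theorem~\ref{th:WP}, giving $v^n_t$ bounded in $L_2(-T,T;[H^{-1}(\cO)]^3)$, $w^n_{tt}$ in $L_2(-T,T;\cF_{-2}(\Om))$, $\vr\bu^n_{tt}$ in $L_2$) let us extract, via Aubin--Dubinsky, a subsequence converging to a full bounded trajectory $U(t)$, with $v^n\to v$ in $C(-T,T;H^{-\e})$, $u^n\to u$ in $C(-T,T;H^{1-\e}_0\times H^{1-\e}_0\times H^{2-\e}_0)$, $w^n_t\to w_t$ in $C(-T,T;H^{1-\e}_0)$, hence weak convergence in $\hch$ at every fixed time. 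The decisive point is upgrading weak to strong convergence at $t=0$: this is done precisely as in Step~6 of Theorem~\ref{th:WP}. Write $\cE=\cE_0+\mathrm{Comp}$ where $\mathrm{Comp}(u)$ is weakly continuous on $\hch$ (cf.\ \eqref{q-q0}); apply the energy equality on $[-T,0]$ to each $U^n$ and to $U$; use $\ga>0$ so that the dissipation term $\ga\int_{-T}^0\|w_\tau\|^2_{H_\alpha}\,d\tau$ is genuinely present and weakly lower semicontinuous, together with lower semicontinuity of $\nu\int E(v,v)$ and of the quadratic form $\cE_0$; conclude $\limsup_n\cE_0(U^n(0))\le\cE_0(U(0))$ and hence $\|U^n(0)\|_{\hch}\to\|U(0)\|_{\hch}$, giving strong convergence in $\hch$. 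One subtle arithmetical input is that $\cE(U^n(-T))$ need not converge a~priori; here I would instead use that along a bounded full trajectory $\cE(U^n(-T))$ is bounded and monotone in $n$ after a further diagonal extraction, or — cleaner — run the energy comparison on $[-T,0]$ using only the obtained convergence $\cE(U^n(-T))\to\cE(U(-T))$ which itself follows by the same $\cE_0+\mathrm{Comp}$ decomposition applied at time $-T$ after noting $\cE_0(U^n(-T))\to\cE_0(U(-T))$ can be bootstrapped by choosing $-T$ to be a point of strong convergence (a set of full measure). Either way the mechanism is Ball's ``energy method''.

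\smallskip\par
\textbf{Step 3: conclusion.}
With asymptotic smoothness and the coercive Lyapunov function in hand, the abstract theorem of Ball (in the form presented in \cite{MRW}; see also \cite{ball}) applies: the semigroup $S_t$ is gradient, asymptotically smooth, has bounded trajectories from bounded sets, and its set of equilibria is bounded, hence it possesses a compact global attractor $\mathfrak{A}$, which coincides with the unstable manifold emanating from $\cN$.

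\smallskip\par
\textbf{Main obstacle.}
The technical crux — and the reason $\ga>0$ is assumed — is Step~2: obtaining strong compactness in the full-energy topology $\hch$. Without rotational damping one cannot control $\int\|w_\tau\|^2_{H_\alpha}$, and because of the rotational inertia term $M_\al=1-\al\De$ the fluid viscosity alone does not produce a finite dissipation integral for $w_t$ in the $H^1$-norm (unlike the lower-order models of \cite{Chu_2010,ChuRyz2011}); the compensated-compactness/energy argument then breaks down. Handling the $\vr=0$ case requires checking that all these steps survive with $Y$ replaced by $H_\alpha$ and the $\vr\|\bu_t\|^2$ term dropped, which is routine given the $H^{1/2}_*$-regularity of $\bu_t$ noted in \eqref{smooth_est}.
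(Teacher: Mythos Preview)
Your overall framework is right --- gradient structure plus asymptotic smoothness via Ball's method --- but Step~2 has a genuine gap that you yourself flag and do not close.

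The energy identity in Step~1 is slightly off: under \eqref{forces-for-stat} the term $\int(G_{sh},u_t)_\Om=\int(g,w_t)_\Om$ is still present, so $\cE$ itself is \emph{not} non-increasing; the correct Lyapunov function is $\Phi(U)=\cE(U)-(g,w)_\Om$, as in the paper. This is minor and easily repaired.

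The real problem is your asymptotic smoothness argument. You try to run the raw energy equality on $[-T,0]$ for the shifted trajectories $U^n(\cdot)=S_{\cdot+t_n}U^n_0$ and for the weak limit $U(\cdot)$, hoping to conclude $\limsup_n\cE_0(U^n(0))\le\cE_0(U(0))$. But this comparison only yields
\[
\limsup_n\cE_0(U^n(0))-\cE_0(U(0))\;\le\;\limsup_n\cE_0(U^n(-T))-\cE_0(U(-T)),
\]
i.e.\ the ``defect'' at time $0$ is bounded by the defect at time $-T$. To close the loop you would need $\cE_0(U^n(-T))\to\cE_0(U(-T))$ for some $T$, which is precisely strong convergence in $\hch$ at time $-T$ --- the very thing you are trying to prove. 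Your two proposed fixes are circular: the Aubin--Dubinsky compactness you extract gives strong convergence only in \emph{weaker} norms ($H^{2-\eps}$ etc.), never in the full energy norm, so there is no ``set of full measure of strong-convergence times'' to choose from; and ``monotone in $n$ after diagonal extraction'' has no content here. This is not the same situation as Step~6 of the proof of Theorem~\ref{th:WP}, where strong convergence of the \emph{initial} data is a hypothesis.

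What the paper does instead is introduce the multiplier functional
\[
\Psi(U)=(M_\al w_t,w)_\Om+\vr(\bu_t,\bu)_\Om+(v,N_0[u])_\cO
\]
and compute $\frac{d}{dt}(\cE+\eta\Psi)$. With a suitable choice of $\eta,\om>0$ (using $\ga>0$) this produces an identity of the form $\frac{d}{dt}\La+2\om\La+L=K$ with $\La=\cE+\eta\Psi$, equivalently the exponentially weighted integral equation \eqref{ball_eq}. The point is that the contribution of the past, $\La(s)e^{-2\om(t-s)}$, now \emph{decays to zero} as $s\to-\infty$ along any bounded trajectory, so one never needs convergence of energies at time $-T$. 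One then checks that $K$ is asymptotically weakly continuous and $L$ asymptotically weakly lower semicontinuous (the delicate point being the sign-indefinite term $-\vr\|\bu_t\|^2_\Om$ in $L$, handled via the trace regularity $\bu_t\in H^{1/2}_*$ coming from $v\in L_2(0,T;V)$), and applies the abstract Theorem~\ref{th:ball}. This multiplier step is the missing idea in your proposal; without it Ball's method does not go through for this problem.
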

We recall
(see, e.g., \cite{BabinVishik, Chueshov,Temam})
that
\textit{global attractor}  of the  dynamical system  $(S_t, \hch)$
is defined as a bounded closed  set $\Ac\subset \hch$
which is  invariant ($S_t\Ac=\Ac$ for all $t>0$) and  uniformly  attracts
all other bounded  sets:
$$
\lim_{t\to\infty} \sup\{ {\rm dist}_{\hch}(S_ty,\Ac):\ y\in B\} = 0
\quad\mbox{for any bounded  set $B$ in $\hch$.}
$$
\begin{proof}
It follows from  energy inequality in \eqref{lin_energy} that the set
\[
\cW_R=\left\{ U\, :\; \Phi(U)\equiv \cE(U)-(g,w)_\Om\le R\right\}
\]
is forward invariant \wrt $S_t$ for each $R>0$.
Here $U=(v;u;u_t)$ with $u=(u^1;u^2;w)$ in the case $\vr>0$ and
 $U=(v;u;w_t)$  in the case $\vr=0$.
As in the proof of Proposition~\ref{pr:stat}  using
 \eqref{forces-for-stat} one can see that
$\Phi(U^n)\to+\infty$ if and only if $\|U^n\|_\cH\to+\infty$.
Therefore
 the set $\cW_R$ is bounded and any bounded set belongs
to $\cW_R$ for some $R$.
Moreover, it follows from  energy inequality \eqref{lin_energy} that
the continuous functional $\Phi(U)$ on $\hch$ possesses
the properties
(i) $\Phi\big(S_tU\big) \leq \Phi(U)$  for all $t\geq 0$ and $U\in\hch$;
(ii) the equality $\Phi(U)=\Phi(S_tU)$ holds for all $t>0$ only if $U$
is a stationary point of $S_t$.
This means that  $\Phi(U)$ is a
\textit{strict Lyapunov function} and
$(\hch,S_t)$  is a gradient dynamical system.
Therefore due to Corollary 2.29\cite{cl-mem}
(see also Theorem~4.6 in \cite{Raugel})
 we need only to prove asymptotic smoothness.
We recall that (see, e.g., \cite{Ha88} or \cite{cl-mem}) that a
dynamical system $(X,S_t)$ is said to be {\em asymptotically} smooth
if for any  closed bounded set $B\subset X$ that is positively invariant ($S_tB\subseteq B$)
one can find a compact set $\cK=\cK(B)$ which uniformly attracts $B$, i.~e.
$\sup\{{\rm dist}_X(S_ty,\cK):\ y\in B\}\to 0$ as $t\to\infty$.
\par
To prove asymptotic smoothness  we use Ball's method of  energy relations (see \cite{ball} and also \cite{MRW}).
For a convenience we
recall the abstract theorem (in a slightly relaxed form) from \cite{MRW}  which represents the main idea
of the method.
\begin{theorem}[\cite{MRW}]\label{th:ball}
Let  $S_t$ be a semigroup of  strongly
 continuous operators in some Hilbert space $\cH$.
 Assume that operators $S_t$ are also weakly continuous in $\cH$ and
 there exist  a number $\om>0$ and functionals  $\Lambda$, $L$ and $K$ on $\cH$
  such that the equality
\begin{multline}
\La(t) +\int_s^t L(\tau)e^{-2\om(t-\tau)}d\tau
=\La(s) e^{-2\om(t-s)}+\int_s^t K(\tau)e^{-2\om(t-\tau)}d\tau, \label{ball_eq}
\end{multline}
holds on the trajectories of the system $(\cH,S_t)$.
Here we use the notation $G(t)=G(S_tU)$, where $G$
is one of the symbols  $\Lambda$, $L$ and $K$.
\par
Let the
 functionals possess the properties:
\begin{itemize}
\item[{\bf (i)}] $\La:\, \cH \arr \R_+$ is a continuous bounded functional and if
 $\{U_j\}_j$ is bounded sequence in $\cH$ and  $t_j \arr +\infty$ is such that (a) $S_{t_j}U_j \rightharpoonup U$ weakly in $\cH$, and (b) $\limsup_{n\arr\infty}\La(S_{t_j}U_j) \le \La(U)$,
 then $S_{t_j}U_j \arr U$ strongly in $\cH$.

\item[{\bf (ii)}]  $K:\; \cH \arr \R$ is 'asymptotically weakly continuous' in the sense that if  $\{U_j\}_j$ is bounded in $\cH$, and  $S_{t_j}U_j \rightharpoonup U$ weakly in $\cH$ as   $t_j \arr +\infty$, then $K(S_\tau U)\in L^{loc}_1(\R_+)$ and
    \begin{equation}\label{K-conv}
        \lim_{j\arr\infty}\int_0^t e^{-2\om(t-s)}K(S_{s+t_j}U_j) ds = \int_0^t e^{-2\om(t-s)}K(S_{s}U) ds, \quad \forall t>0.
    \end{equation}
\item[{\bf (iii)}]  $L$ is 'asymptotically weakly lower semicontinuous' in the sense that if $\{U_j\}_j$ is bounded in $\cH$, $t_j \arr +\infty$, $S_{t_j}U_j \rightharpoonup U$ weakly in $\cH$, then $L(S_\tau U)\in L^{loc}_1(\R_+)$ and
    \begin{equation*}
        \liminf_{j\arr\infty}\int_0^t e^{-2\om(t-s)}L(S_{s+t_j}U_j) ds \ge \int_0^t e^{-2\om(t-s)}L(S_{s}U) ds, \quad \forall t>0.
    \end{equation*}
\end{itemize}
Then $S_t$ is asymptotically smooth.
\end{theorem}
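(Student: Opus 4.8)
The plan is to verify the standard asymptotic compactness criterion for asymptotic smoothness (see, e.g., \cite{Ha88}, \cite{cl-mem}): it suffices to show that for every closed bounded forward invariant set $B\subset\cH$, every sequence $\{U_j\}\subset B$ and every $t_j\to+\infty$, the sequence $\{S_{t_j}U_j\}$ is relatively compact in $\cH$ (the attracting compact set is then the $\om$-limit set of $B$). So fix such $B$, $\{U_j\}$ and $t_j\to+\infty$. Since $B$ is bounded and $\cH$ is a Hilbert space, after passing to a subsequence (not relabelled) we may assume $S_{t_j}U_j\rightharpoonup\zeta$ weakly in $\cH$ for some $\zeta\in\cH$. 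By hypothesis (i) it is then enough to prove
\[
\limsup_{j\to\infty}\La(S_{t_j}U_j)\le\La(\zeta),
\]
because repeating the same extraction on an arbitrary subsequence of $\{S_{t_j}U_j\}$ will produce a strongly convergent sub-subsequence, hence relative compactness.

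The core step is to use the energy relation \eqref{ball_eq} twice: once along the approximating trajectories shifted backwards in time, and once along the limiting trajectory. Put $M_B:=\sup_{U\in B}\La(U)<\infty$ and $\ell:=\limsup_j\La(S_{t_j}U_j)$, and choose a subsequence $\{j_k\}$ with $\La(S_{t_{j_k}}U_{j_k})\to\ell$. Fix $T>0$; for $k$ large enough $W_k:=S_{t_{j_k}-T}U_{j_k}$ lies in $B$ (forward invariance), so along a further subsequence $W_k\rightharpoonup W$ in $\cH$. Weak continuity of $S_T$ gives $S_{t_{j_k}}U_{j_k}=S_TW_k\rightharpoonup S_TW$, so $S_TW=\zeta$. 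Writing \eqref{ball_eq} along the trajectory issued from $W_k$ with $s=0$, $t=T$, using $\La(W_k)\le M_B$ and passing to the limit in $k$ (hypothesis (ii) handles the $K$-integral along $W_k$, hypothesis (iii) gives the $\liminf$-bound for the $L$-integral along $W_k$) yields $\ell+\int_0^TL(S_sW)e^{-2\om(T-s)}\,ds\le M_Be^{-2\om T}+\int_0^TK(S_sW)e^{-2\om(T-s)}\,ds$. On the other hand, \eqref{ball_eq} along the trajectory issued from $W$ itself (again $s=0$, $t=T$) reads $\La(S_TW)+\int_0^TL(S_sW)e^{-2\om(T-s)}\,ds=\La(W)e^{-2\om T}+\int_0^TK(S_sW)e^{-2\om(T-s)}\,ds$. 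Subtracting, the $K$- and $L$-integrals cancel and, since $\La\ge0$, we are left with
\[
\ell\le\La(S_TW)+M_Be^{-2\om T}=\La(\zeta)+M_Be^{-2\om T}.
\]
As this holds for every $T>0$, letting $T\to+\infty$ gives $\limsup_j\La(S_{t_j}U_j)=\ell\le\La(\zeta)$; hypothesis (i) then upgrades $S_{t_j}U_j\rightharpoonup\zeta$ to strong convergence, completing the proof.

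\emph{The main obstacle} is not analytic — the statement is an abstract distillation of Ball's energy method and every passage to the limit is licensed directly by (i)--(iii). The one point requiring care is the order of the subsequence extractions: the weak limit $W=W^{(T)}$ of the backward time shift $S_{t_{j_k}-T}U_{j_k}$ genuinely depends on $T$ and is produced along a $T$-dependent sub-subsequence, so one must first freeze a subsequence realizing the $\limsup$ of $\La$ and extract $W^{(T)}$ only afterwards, so that no information is lost in the final passage $T\to+\infty$. Conceptually, the mechanism is that writing \eqref{ball_eq} simultaneously along the approximants and along the limit trajectory cancels the uncontrolled $K$ and $L$ contributions and isolates the exponentially small remainder $M_Be^{-2\om T}$ — which is exactly the role of the weight $e^{-2\om(t-\tau)}$ in the energy relation.
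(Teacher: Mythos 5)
The paper does not prove this theorem: it is quoted verbatim (in a ``slightly relaxed form'') from Moise--Rosa--Wang \cite{MRW} and used as a black box, so there is no in-paper proof to compare against. Your argument is correct and is in fact the standard Ball/Moise--Rosa--Wang shift-back argument: reduce asymptotic smoothness to relative compactness of $\{S_{t_j}U_j\}$, extract a weak limit $\zeta$, write \eqref{ball_eq} on $[0,T]$ along the backward-shifted approximants $W_k=S_{t_{j_k}-T}U_{j_k}$ and separately along the limiting trajectory from $W$, use (ii)/(iii) to pass to the limit in the $K$- and $L$-integrals, subtract the two identities to cancel those integrals, and conclude $\limsup_j\La(S_{t_j}U_j)\le\La(\zeta)+M_Be^{-2\omega T}$ before letting $T\to\infty$ and invoking (i). Your remark about freezing the $\limsup$-realizing subsequence before the $T$-dependent extraction is exactly the right point of care.
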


Now we check validity of the hypotheses of Theorem~\ref{th:ball} in our case.
\smallskip\par\noindent
 {\it Step 1 (continuity):}
The continuity of the evolutional operator in both
(strong and weak) senses follows from Theorem~\ref{th:WP}.
\smallskip\par\noindent
{\it Step 2 (energy equality):}
 All the calculations below  can be justified by considering  approximate solutions.
\par
Let $\Psi(t)=\Psi(v(t), u(t), u_t(t))$, where
\[
\Psi(v, u, u_t)= (M_\al w_t,w)_\Om+
\vr (\bu_t,\bar{u})_\Om+(v,N_0[u])_\cO,
\]
where $U=(v;u;u_t)\equiv (v;\bu;w; \bu_t; w_t)$ is a weak solution and $N_0$
is given by \eqref{fl.n0}. One can see that
\begin{align*}
   \frac{d}{dt} \Psi(t)=& (M_\al w_{tt},w)_\Om+
\vr (\bu_{tt},\bar{u})_\Om+(v_t,N_0[u])_\cO
\\ &
+\|M^{1/2}_\al w_t\|^2_\Om+
\vr \|\bu_t\|^2_\Om+(v,N_0[u_t])_\cO \\
=& \|M^{1/2}_\al w_t\|^2_\Om+
\vr \|\bu_t\|^2_\Om+ (v,N_0[u_t])_\cO
-\nu E(v,N_0[u]) \\ & -\|\De w\|^2-Q(u) -
\frac1{1-\mu}\int_\Om\left[w_{x_1}^4+ w_{x_2}^4+ (1+\mu)w_{x_1}^2w_{x_2}^2 \right] dx'\\
& - \ga (M_\al w_{t},w)_\Om
+\Phi_0(t)
 + (g, w)_\Om,
\end{align*}
where $\Phi_0(t)$ is a linear combination of the terms of the form
\[
 (D_lu^i D_jw, D_mw)_\Om
 ~~\mbox{and}
 ~~(k_i w D_l w ,D_m w)_\Om.
 \]
Let $\La(t)=\cE(t)+ \eta\Psi(t)$, where $\eta>0$ will be chosen later.
Since
\[
\frac{d}{dt}\cE(t)=-\nu E(v,v) - \ga ||M_\al^{1/2}w_t||^2_{\Om}
 +(g,w_t)_\Om
\]
one can see
\begin{align*}
   \frac{d}{dt}\La(t) &+ (\ga-\eta) \|M^{1/2}_\al w_t\|^2_\Om +
   \nu E(v,v)-\eta \vr \|\bu_t\|^2_\Om
\\ & +\eta\left[\|\De w\|^2+Q(u) +
\frac1{1-\mu}\int_\Om\left[w_{x_1}^4+ w_{x_2}^4+ (1+\mu)w_{x_1}^2w_{x_2}^2 \right] dx'\right]
 \\ = &\Phi_1(t),
\end{align*}
where
\begin{align*}
\Phi_1(t) = &  \eta\left[ (v,N_0[u_t])_\cO
-\nu E(v,N_0[u]) - \ga (M_\al w_{t},w)_\Om\right]
\\ &
+\eta \Phi_0(t)
 + \eta (g, w)_\Om +(g,w_t)_\Om.
\end{align*}
Consequently
\begin{align*}
   \frac{d}{dt}\La &+ 2\om \La + (\ga-\eta-\om) \|M^{1/2}_\al w_t\|^2_\Om +
   (\nu-\om) E(v,v)-(\eta+\om) \vr \|\bu_t\|^2_\Om
\\ & +(\eta-\om)\left[\|\De w\|^2+Q(u) \right] \\ &+
\frac\eta{1-\mu}\int_\Om\left[w_{x_1}^4+ w_{x_2}^4+ (1+\mu)w_{x_1}^2w_{x_2}^2 \right] dx'
  = \Phi_1(t)+\om\eta\Psi(t).
\end{align*}
Thus we obtain  \eqref{ball_eq}
with $L$ and $K$ given by
\begin{align*}
L(t)= &(\ga-\eta-\om) \|M^{1/2}_\al w_t\|^2_\Om +
   (\nu-\om) E(v,v)
   \\ &
   -(\eta+\om) \vr \|\bu_t\|^2_\Om
    +(\eta-\om)\left[\|\De w\|^2+Q(u) \right]
    \\ & +
\frac\eta{1-\mu}\int_\Om\left[w_{x_1}^4+ w_{x_2}^4+ (1+\mu)w_{x_1}^2w_{x_2}^2 \right] dx', \\
K(t)=& \Phi_1(t)+\om\eta\Psi(t).
\end{align*}
We choose $\eta\ge\om>0$ such that $\ga-\eta-\om\ge0$ and  $\nu-\om\ge 0$.
\smallskip\par\noindent
{\it Step 3 (properties of the functionals):} Now we prove that the functionals $\La$,
 $L$ and $K$ satisfy requirements (i)-(iii) in Theorem~\ref{th:ball}.
First we  rewrite the energy $\cE$ in the form
 \[
 \cE(v,u,u_t)= \cE_0(v,u,u_t)+{\rm Comp}\, (u),
\]
where the {\em quadratic} energy functional $\cE_0$ is given by \eqref{en-def-0} and
${\rm Comp}\, (u)$ is a functional  which is continuous \wrt weak
convergence. By Proposition \ref{pr:Korn} the functional $\cE_0(v,u,u_t)$ provides an equivalent norm on $\hch$ and  therefore
 $\cE(t)$ satisfies (i) by the properties of weak convergence.
Now we show that  $\Psi$ is weakly continuous.
We start with the third term. If $u_j \rightharpoonup u$ weakly in $W$, then due to Proposition \ref{pr:stokes} $N_0 u_j \rightharpoonup N_0 u$ weakly in $[H^{3/2}(\cO)]^3\bigcap X$ and strongly in $X$. Thus, $(v_j,N_0u^j)_\cO \arr (v,N_0u)_\cO$. The remaining terms are obviously weak continuous.
Thus $\La(t)$ satisfies (i).
\par
Now we consider $K$. As above $\Psi$ and all the terms in $\Phi_1$ (except of $\Phi_0$)
are obviously  weak continuous. The same is true for $\Phi_0$ due to the fact
that $(f_1;f_2)\mapsto f_1\cdot f_2$ is a (strongly) continuous
from $H^{1/2}(\Om)\times H^{1/2}(\Om)$ into $L_2(\Om)$.
Applying this  property  to the terms  $(D_lu^i D_jw, D_mw)_\Om$ and $(k_iw D_jw, D_mw)_\Om$, we find that the functional $K$ is weakly continuous and thus
by the Lebesgue dominated convergence theorem
the convergence of integrals in \eqref{K-conv} holds.
\par
As for the functional $L$, all its terms are obviously weakly lower semicontinuous (because of the convexity norm properties, relation (\ref{q-q0}) and Proposition \ref{pr:Korn}), except of $E(v,v)$ and $- \vr \|\bu_t\|^2_\Om$. Let $\{U_j\}\subset \cH$ is bounded, $t_j\arr\infty$, and $S_{t_j}U_j\rightharpoonup U$ weakly in $\cH$. Denote by $U_j(s)$ a solution to the system under consideration with the initial data $S_{t_j}U_j$.
From the energy balance equality in \eqref{lin_energy} we have
that the sequence of the velocity field components $v_j$ of $U_j(s)$
is  bounded in $L_2(0,t;V)$. Therefore
 due to weak continuity property  of the form $E(v, v)$, we obtain
\begin{equation*}
\int_0^t e^{-\om(t-s)}E(v(s), v(s))ds \le \liminf_{j\arr\infty} \int_0^t e^{-\om(t-s)}E(v_j(s), v_j(s))ds.
\end{equation*}
Due to the standard trace theorem we can suppose that $\pd_t\bu_j=(v^1_j|_\Om, v^2_j|_\Om)$ converges to $\pd_t\bu$ weakly in $H^{1/2}_\ast(\Om))$
(hence strongly in $L_2(\Om)$) for almost all $t>0$. Therefore the Lebesgue  theorem yields that $\pd_t\bu_j \arr \pd_t\bu$ strongly in $L_2(0,T;L_2(\Om))$, so the property (iii) holds.
\par
Thus we can apply
Theorem \ref{th:ball} to prove asymptotic smoothness
and complete the proof of Theorem~\ref{th:attractor}.
\end{proof}
\begin{remark}
{\rm
  We do not know  whether  we can relax essentially the structural force hypothesis  \eqref{forces-for-stat} in Theorem~\ref{th:attractor}.
 We use \eqref{forces-for-stat} to prove the boundedness of the sublevel  set
 $\cW_R$ only. The question on the validity of this weak form of the coercivity
 of the energy functional is still open
 in the case of general external loads.
}
\end{remark}

\end{document}